\documentclass[12pt,a4paper]{article}
\usepackage{amsmath}
\usepackage{amssymb}
\usepackage{graphicx}
\usepackage{color}
\usepackage{amsthm}
\usepackage{amsfonts}
\usepackage{bbm}

\allowdisplaybreaks

\hoffset=-1cm
\voffset=-2cm
\addtolength{\textwidth}{1.5 cm}
\addtolength{\textheight}{1.5 cm}
\numberwithin{equation}{section}
\title{One-dimensional random interlacements}
\author{Darcy Camargo \and Serguei Popov}
\newcommand{\pr}{\mathbb{P}}
\newcommand{\ord}{\mathcal{O}}
\newcommand{\tori}{\mathbb{Z}/n\mathbb{Z}}

\newcommand{\vac}{\mathcal{V}^\alpha}
\newcommand{\cpr}{\widetilde{\mathbb{P}}}
\newcommand{\rpr}{\widehat{\mathbb{P}}}
\newcommand{\ex}{\mathbb{E}}
\newcommand{\rex}{\mathbb{\widehat{E}}}
\newcommand{\ind}{\mathbbm{1}}
\newcommand{\zp}{\mathbb{Z}^+}
\newcommand{\zd}{\mathbb{Z}^d}

\newcommand\lawlimit{\mathrel{\overset{\makebox[0pt]{\mbox{\tiny law}}}{\Rightarrow}}}
\DeclareMathOperator{\cp}{Cap}
\DeclareMathOperator{\cpt}{\widehat{Cap}}
\DeclareMathOperator{\diam}{Diam}

\DeclareMathOperator{\hm}{hm}
\newtheorem{teo}{Theorem}[section]
\newtheorem{lemma}[teo]{Lemma}

\newtheorem{coro}[teo]{Corollary}

\let\phi\varphi
\let\epsilon\varepsilon
\begin{document}
\bibliographystyle{plain}
\maketitle

{\footnotesize  \noindent Department of Statistics, Institute of Mathematics,
 Statistics and Scientific Computation, University of Campinas --
UNICAMP, rua S\'ergio Buarque de Holanda 651,
13083--859, Campinas SP, Brazil\\
\noindent e-mails: \texttt{darcygcamargo@gmail.com, popov@ime.unicamp.br}

}
\begin{abstract}
We base ourselves on the construction of the two-dimensional random interlacements~\cite{2dint} to define the one-dimensional version of the process. For this constructions we consider simple random walks conditioned on never hitting the origin, which makes them transient. We also compare this process to the conditional random walk on the ring graph. Our results are the convergence of the vacant set on the ring graph to the vacant set of one-dimensional random interlacements, a central limit theorem for the interlacements' local time for sites far from the origin and the convergence in law of the local times of the conditional walk on the ring graph to the interlacements' local times.
\\[.3cm]\textbf{Keywords:} random interlacements, local times, occupation times, simple random walk, Doob's $h$-transform. 
\\[.3cm]\textbf{AMS 2000 subject classifications:}  60G50, 60K35, 60F05.
\end{abstract}

\section{Introduction and results}

The process of random interlacements was initially introduced by Alain Sol Sznitman in \cite{inter1}. It can be viewed as a Poisson point process of rate $\alpha$ (what is also the process parameter) in a space of doubly infinite trajectories of simple random walks in dimension $d\geq3$. The law of the \emph{vacant set} (i.e., the set of unvisited sites) is completely characterized by
\[
\pr[A\subset \vac]=e^{-\alpha\cp(A)}\mbox{,\quad for all $A\subset \zd$},
 \] 
where $\cp$ stands for the classical capacity for random walks 
(see section~6.5 of~\cite{rwb}). First results about this process were mainly about the non-triviality of the vacant set percolation and its properties~\cite{inter1,inter2, inter3} and also how this process can be obtained as a local picture of the trace left by the random walk in the $d$-dimensional torus (see e.g.~\cite{torusint}). A good introduction to the subject can be found in~\cite{ribook} and in \cite{randomint}.

 In \cite{2dint} the model of random interlacements in two dimensions was introduced. This process could not be defined using the classical approach,  as the simple random walk in two dimensions is recurrent and so just one trajectory would cover the entire discrete plane $\mathbb{Z}^2$, leaving nothing to be seen. Therefore, in order to construct the process, one uses simple random walks conditioned to never hitting the origin. This conditioning turns the walk transient and the construction of the process becomes possible, 
at cost of loosing the stationarity (there is, however, a so-called
\emph{conditional stationarity}, see Theorem~2.3~(i) of~\cite{2dint}). In its construction a parameter change was made to make the formulas cleaner, so the law of the vacant set is characterized by
\[
\pr[A\subset \vac]=e^{-\alpha\pi\cp(A\cup\{0\})}.
 \] 
In this article we base ourselves on the approach of~\cite{2dint} to construct the one-dimensional random interlacements process. We were able to define the process making use of conditional random walks in this construction. It was defined in such a way that holds for any $A\subset\mathbb{Z}$ that
\[
\pr[A\subset \vac]=e^{-\alpha\cp(A\cup\{0\})}=e^{-\alpha\diam(A)/2},
\]
where $\diam(A)$ stands for the diameter of the set.

  Here percolation questions are not of interest, since our vacant set is an interval containing the origin; so, we focus on other problems, mainly about the relation to the ring graph (the one-dimensional torus) and the local times of the process.
 
 A well studied problem about random interlacements is how it represents the local picture of a random walk on a torus, when it is left to run for a certain fixed time. Consider the $d$-dimensional torus $(\tori)^d$ and denote the trace left for a random walk until time $t$ for $X_{[0,t]}$. In Theorem~1.1 of~\cite{torusint} it was shown that for any $\alpha>0$, $\delta>0$ and $\epsilon \in (0,1)$ we can construct a coupling between the random interlacements and the random walk on this torus in such a way that for a constant $c$ depending on $\alpha$, $\delta$, $\epsilon$ we have
 \begin{equation}
 \pr[\mathcal{I}^{\alpha(1-\epsilon)}\cap A\subseteq X_{[0,\lfloor \alpha n^d \rfloor]}\cap A\subseteq\mathcal{I}^{\alpha(1+\epsilon)}\cap A]\geq 1-c n^{-\delta}.
  \end{equation}
Denoting the vacant set left by the random walk on the torus by $V_t^d=(\tori)^d\backslash X_{[0,t]}$ for $d\geq 3$, it satisfies:
\begin{equation*}
V_{\lfloor \alpha n^d\rfloor}^d\lawlimit \vac_d,
\end{equation*}
where $\vac_d$ stands for the vacant set for the random interlacements in $\zd$.

In Theorem~2.6 of~\cite{2dint} this result was extended to the random interlacements in two dimensions and a simple random walk on the torus conditioned to never hitting the origin. For $d=2$ we use an analogous notation for the multidimensional version of vacant set, but here we have conditional random walks. So, let $\{\hat{X}_t\}_{t\geq 0}$ be the random walk on the two-dimensional torus conditioned to never hitting the origin and $\hat{X}_{[0,t]}$ its trajectory until time $t$. Denoting $V_t=\mathbb{Z}^2\backslash \hat{X}_{[0,t]}$,
 Theorem~2.6 of~\cite{2dint} states that
\begin{equation}
V_{\left\lfloor \frac{4\alpha}{\pi} n^2\ln^2 n\right\rfloor}^2\lawlimit \vac_2.
\end{equation}

The first result we prove is the one-dimensional random interlacements version of this theorem. The one-dimensional discrete torus with $n$ sites is in fact the ring graph $R_n$. Here we will consider a simple random walk conditioned to never hitting the origin. 
\begin{teo}
\label{teolimit}
Let $X_t$ be the conditional random walk on $R_{n}$ started at $n$ and $V_t=\{x\in R_{n}\mid X_n\neq x \mbox{ for }n\leq t\}$, then
\[
V_{\left\lfloor\frac{\alpha n^3}{2\pi^2}\right\rfloor}\lawlimit \mathcal{V}^\alpha.
\]
\end{teo}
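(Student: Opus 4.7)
The plan is to mimic the strategy of the proof of Theorem~2.6 in \cite{2dint}, adapting the scaling and the potential-theoretic inputs to the one-dimensional setting. Since the law of $\vac$ is determined by the probabilities $\pr[A\subseteq\vac]=e^{-\alpha\diam(A)/2}$ for finite $A\subset\mathbb{Z}$, the convergence in law reduces to showing, for every such $A$,
\[
\pr\bigl[A\cap X_{[0,t_n]}=\emptyset\bigr]\longrightarrow e^{-\alpha\diam(A)/2},
\qquad t_n=\bigl\lfloor\tfrac{\alpha n^{3}}{2\pi^{2}}\bigr\rfloor,
\]
where for $n$ large $A$ is identified with a subset of $R_n$ sitting near $0$ while the starting vertex lies far from $A$.

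I would then decompose the trajectory at an intermediate scale $r_n$ with $1\ll r_n\ll n$. Setting $A'=A\cup\{0\}$, introduce alternating stopping times corresponding to visits to a neighbourhood of $A'$ and to returns to the boundary of $[-r_n,r_n]\subset R_n$; by the strong Markov property, these excursions are, to leading order, i.i.d.\ conditionally on the entry point. Two quantities then need sharp control: the probability $p_n$ that a single excursion actually visits $A$, and the number $N_n$ of excursions performed by time $t_n$. The hitting probability $p_n$ would be estimated by a gambler's-ruin computation in the $h$-transformed chain with $h(x)=|x|$, comparing the conditional walk on $R_n$ to the conditional walk on $\mathbb{Z}$ (for which $\cp(A\cup\{0\})=\diam(A)/2$); this should yield $p_n\sim c\cdot\cp(A\cup\{0\})/n$ for an explicit constant $c$. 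The count $N_n$ would then be handled by a law of large numbers for the hitting-time chain on $\partial[-r_n,r_n]$, and the time scale $t_n=\alpha n^3/(2\pi^2)$ is chosen precisely so that $N_np_n\to \alpha\cp(A\cup\{0\})$.

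A Poisson-approximation step (Stein--Chen, or a direct factorial-moment argument based on the rareness of hits and the asymptotic independence of excursions) then yields that the number of excursions that visit $A$ converges in law to $\mathrm{Poisson}(\alpha\diam(A)/2)$, which gives the required limit for $\pr[A\cap X_{[0,t_n]}=\emptyset]$.

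The main obstacle is the delicate choice of the intermediate scale $r_n$: it must be large enough that the entry distribution on $\partial[-r_n,r_n]$ is well-mixed (so the successive excursions are effectively i.i.d.\ and the local capacity on the ring matches that on $\mathbb{Z}$ up to a negligible correction), and simultaneously small enough that the number of excursions in $[0,t_n]$ is large with high probability. Quantifying all error terms, in particular showing that the ring-versus-line corrections to the hitting probability are $o(1/n)$ and that the excursion times concentrate sufficiently, is the technical heart of the argument; this is where the coupling and regeneration ideas from \cite{2dint} must be re-derived with purely one-dimensional potential theory, replacing the $\log n$ factors of the two-dimensional case by genuine polynomial-in-$n$ corrections.
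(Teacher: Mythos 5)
Your route is genuinely different from the one the paper takes. You propose to transplant the excursion-decomposition-plus-Poisson-approximation strategy from the higher-dimensional literature: regenerate at an intermediate scale $r_n$, estimate the per-excursion hitting probability, count excursions, and invoke Stein--Chen. The paper instead does something direct and very specific to $d=1$. It observes that
\[
\rpr^t_x\bigl[[-a,b]\subset V_t\bigr]=\frac{\pr_x[\tau_{\{b,n-a\}}>t]}{\pr_x[\tau_{\{0,n\}}>t]}=\frac{h_{n-a-b}(x-b,t)}{h_n(x,t)},
\]
then uses the exact Feller gambler's-ruin spectral formula (Lemma~\ref{lemmaruin}/Lemma~\ref{lemmah}) together with its one-term asymptotic (Theorem~\ref{asymp}): $h_n(x,t)\sim\frac{4}{\pi}\cos^t(\pi/n)\sin(\pi x/n)$. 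Choosing $x=\lfloor n/2\rfloor$ kills the sine factors, and the cosine ratio collapses to $\exp\{-t\pi^2(a+b)/n^3\}$, identifying $t=\alpha n^3/(2\pi^2)$ by inspection. No Poissonization, no regeneration, no coupling of excursions. This is a place where one dimension actively helps: the survival probability has an explicit finite trigonometric expansion, and the whole theorem is a single ratio. Your heavier machinery is the one the paper deliberately postpones to Theorem~\ref{teolocal}, where the target is the full local-time law and the direct formula no longer suffices.

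There is also a concrete gap in your plan: the scale condition $1\ll r_n\ll n$ is natural in $d\geq 2$ but does not balance in $d=1$. By Lemma~\ref{lemmacrw}(ii), the $h$-transformed walk started at distance $r_n$ hits a fixed neighbourhood of $0$ with probability $\asymp \cp(A\cup\{0\})/r_n$, and by Lemma~\ref{lemmacrw}(iii) it escapes to the middle of the ring with probability $\asymp 1/r_n$, whence a return to $\partial[-r_n,r_n]$ takes time of order $n^2/r_n$ on average (the rare deep excursions dominate). With $t_n\asymp n^3$ this gives a number of excursions $N_n\asymp t_n r_n/n^2\asymp n r_n$, so $N_n p_n\asymp n\cp(A\cup\{0\})$, which diverges for any $r_n$ — and the alternative regime $r_n\gg n^{2/3}$ still forces $N_n p_n\asymp n^3\cp(A\cup\{0\})/r_n^3$, finite only when $r_n\asymp n$. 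So to make your plan work you must regenerate at macroscopic scale (essentially at the starting vertex $\lfloor n/2\rfloor$, as the paper's proof of Theorem~\ref{teolocal} in fact does), which removes the ``intermediate'' nature of $r_n$ and makes the approximate-independence claim you invoke much less automatic. This can be repaired, but it means the excursion route is both heavier and more delicate than the paper's one-line Feller computation for this particular statement.
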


Our next results are about the local times (sometimes called occupation times) of the random walk. For random interlacements, heuristically, the local time in~$x$ is the total number of visits to~$x$ of all the particles. Some previous results regarding local times of random interlacements, such as a Ray-Knight-type theorems and large deviations, can be found in \cite{local1} and \cite{local2}. 

\begin{teo}
\label{teotcl}
Let $\ell(x)$ be the local time of the one-dimensional random interlacements for $\in \zp$, then as $x\to \infty$
\[
\frac{\ell(x)-\alpha x^2}{x\sqrt{\alpha(4x-1)}}\lawlimit Z \sim \mathcal{N}(0,1 )
\]
\end{teo}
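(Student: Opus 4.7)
My plan is to show that $\ell(x)$ has a compound-Poisson representation and then apply a characteristic-function argument. Each trajectory in the interlacement Poisson point process either visits $x$ or it does not, and those that do form a Poisson subprocess; by the characterisation $\pr[x\in\vac]=e^{-\alpha x/2}$ (which comes from $\cp(\{0,x\})=x/2$), the count $N_x$ of trajectories hitting $\{x\}$ is Poisson with mean $\alpha x/2$. Denoting by $L_i$ the number of visits to $x$ made by the $i$-th such trajectory, independence of distinct trajectories in the PPP yields
\[
\ell(x)=\sum_{i=1}^{N_x}L_i,
\]
with the $L_i$ i.i.d.\ conditionally on $N_x$.

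Next I would identify the law of $L_i$. The walk conditioned never to hit $0$ is Doob's $h$-transform with $h(y)=|y|$, reversible with respect to $\pi(y)=y^2$, and on $\zp$ its Green's function is $G^h(x,y)=(y/x)\cdot 2\min(x,y)$. In particular $G^h(x,x)=2x$, so the return probability to $x$ is $1-\frac{1}{2x}$ and the number of visits to $x$ by a conditional walk starting at $x$ is $\mathrm{Geom}(1/(2x))$ on $\{1,2,\ldots\}$. Decomposing a doubly infinite interlacement trajectory at its \emph{first} visit to $x$ — before which there are no visits, after which the strong Markov property produces a conditional walk from $x$ — gives the same geometric law for $L_i$. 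Hence $\ex[L_i]=2x$ and $\ex[L_i^2]=(2-p)/p^2=8x^2-2x$, and the compound-Poisson formulas give
\[
\ex[\ell(x)]=\tfrac{\alpha x}{2}\cdot 2x=\alpha x^2,\qquad \var[\ell(x)]=\tfrac{\alpha x}{2}(8x^2-2x)=\alpha x^2(4x-1),
\]
matching the centering and normalisation in the theorem.

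For the CLT itself, set $\sigma_x=x\sqrt{\alpha(4x-1)}$ and $\lambda_x=\alpha x/2$, and let $\phi_L(s)=\ex[e^{isL}]$ be the Geom$(1/(2x))$ characteristic function. The compound-Poisson representation gives
\[
\ex\bigl[e^{it\ell(x)/\sigma_x}\bigr]=\exp\bigl(\lambda_x(\phi_L(t/\sigma_x)-1)\bigr).
\]
A third-order Taylor expansion $\phi_L(s)=1+is\,\ex[L]-\tfrac{s^2}{2}\ex[L^2]+O(|s|^3\ex[L^3])$ applied at $s=t/\sigma_x$, together with $\ex[L^3]=O(x^3)$ and $\sigma_x^3\sim x^{9/2}$, produces
\[
\lambda_x\bigl(\phi_L(t/\sigma_x)-1\bigr)=\frac{it\alpha x^2}{\sigma_x}-\frac{t^2}{2}+O(x^{-1/2}).
\]
Multiplying by the centering factor $e^{-it\alpha x^2/\sigma_x}$ cancels the imaginary term and the characteristic function of the standardised local time converges to $e^{-t^2/2}$, giving convergence in law to $\mathcal N(0,1)$.

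I expect the main difficulty to lie in the second step: justifying that the per-trajectory visit count is exactly $\mathrm{Geom}(1/(2x))$ and that distinct trajectories contribute independent counts. This needs a careful description of the intensity measure of the PPP of interlacement trajectories — essentially the one-dimensional analogue of the framework developed in~\cite{2dint}; reversibility of the conditional walk with respect to $\pi(y)=y^2$ then ensures that the backward portion of a two-sided trajectory before its first visit to $x$ contributes nothing further to the visit count, so that the decomposition at the first hitting time captures the full geometric law.
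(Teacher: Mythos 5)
Your proposal is correct and follows essentially the same route as the paper: represent $\ell(x)$ as a compound Poisson sum of i.i.d.\ Geometric$((2x)^{-1})$ visit counts (using that the conditional-walk escape probability from $x$ is $1/(2x)$, the paper's Lemma~\ref{lemmacrw}(iii)), then centre, scale, and Taylor-expand the characteristic function to invoke L\'evy continuity. The only cosmetic difference is that you expand $\phi_L$ and feed moments into the compound-Poisson exponent, whereas the paper first writes the closed-form characteristic function of $\ell(x)$ (Lemma~\ref{lemmacharac}) and then expands that exponent directly; both yield the same $-t^2/2 + O(x^{-1/2})$ tail.
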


Finally, our last result is about the local times convergence for the random walk on the ring graph.   Our approach in the demonstration is constructing a coupling of local times with independent trajectories of the conditional random walk in $\zp$.

\begin{teo}
\label{teolocal}
Let $\ell(x)$ be the local time in~$x$ of the one-dimensional random interlacements for $\in \zp$, and $L_n(x)$ the local time in~$x$
 for the random walk in $R_{2n}$ up to time $\lfloor{4\alpha n^3}/{\pi^2}\rfloor$ conditioned on not hitting the origin.
 Then as $n\to \infty$
\[
L_n(x) \lawlimit \ell(x).
\]
\end{teo}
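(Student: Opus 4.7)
The plan is to decompose $L_n(x)$ into contributions from successive excursions of the conditional walk on $R_{2n}$ through a mesoscopic neighborhood of $x$, and to couple each such excursion with an independent copy of the conditional walk on $\zp$. This mirrors the Poissonian structure of the random interlacements' local time: from the construction of one-dimensional random interlacements as a Poisson process of conditional trajectories, one obtains the identity
\[
\ell(x)\laweq \sum_{i=1}^{N} T_i,
\]
where $N$ is a Poisson variable whose parameter is read off from $\pr[x\in \vac]=e^{-\alpha\cp(\{0,x\})}=e^{-\alpha x/2}$, and the $T_i$ are i.i.d.\ copies of the local time at $x$ for an independent conditional walk on $\zp$ entering $\{x\}$ from its hitting distribution. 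Our aim is thus to show that $L_n(x)$ admits an asymptotically equivalent decomposition.

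The coupling is constructed in three stages. First, introduce a mesoscopic shell $A_x=\{y\in R_{2n}: |y-x|\le L_n\}$ with $L_n\to\infty$ and $L_n=o(n)$, and cut the trajectory $\{X_t\}_{t\le \lfloor 4\alpha n^3/\pi^2\rfloor}$ into alternating visits to $A_x$ and to a ``far'' region around the origin. Second, using estimates for the Doob $h$-transform on $R_{2n}$ with $h$ being the hitting time of $0$, show that uniformly within $A_x$ the conditional transition probabilities agree with those of the conditional walk on $\zp$ up to an error that vanishes as $n\to\infty$; this allows us to couple each excursion starting from $A_x$, with high probability, with an independent conditional walk on $\zp$ run until it exits a large window, so that the excursion's contribution to $L_n(x)$ coincides with the corresponding $T_i$. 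Third, using mixing estimates for the conditional walk between successive visits to $A_x$, argue that the number of excursions hitting $x$ during the time window $[0,\lfloor 4\alpha n^3/\pi^2\rfloor]$ converges in law to $N$, and that the successive local times $T_i$ are asymptotically independent. Combining these three steps and invoking Theorem~\ref{teolimit} (at the level of the number of ``effective'' excursions) yields $L_n(x)\lawlimit \ell(x)$.

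The main technical obstacle is the quantitative control of the last two stages: one must estimate the entrance law into $A_x$ and the hitting probabilities for the conditional walk on the ring with enough precision to couple individual excursions to genuinely independent conditional walks on $\zp$, and to rule out anomalously long excursions that would contribute an atypically large local time at $x$. The computations are analogous in flavour to those behind Theorem~\ref{teolimit}, but at the finer level of local times rather than just occupation indicators, which is why the excursion coupling --- rather than a direct reading-off from the vacant set convergence --- is needed.
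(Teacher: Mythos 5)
Your proposal shares the paper's core strategy --- decompose the trajectory into excursions, couple each excursion's contribution to $L_n(x)$ with an independent conditional walk on $\zp$, and obtain a compound-Poisson limit --- but the specific decomposition you use is genuinely different, and in its current form it has gaps that would need substantial work to close.

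The paper's decomposition is \emph{temporal}: the time window $[0,t^*]$ is cut into $m=\lfloor\ln n\rfloor$ intervals $I_j$ defined by returns of the walk to its starting point $n$ (the antipode of the origin on $R_{2n}$) after each block of length $\eta\approx t^*/\ln n$. Since the walk regenerates (approximately) at $n$, each interval contributes an approximately independent Bernoulli trial with hitting probability $\approx \alpha x/(2\ln n)$ (Lemma~\ref{lemmapiece}), which gives the Poisson number of excursions by a binomial-to-Poisson limit. Your decomposition is \emph{spatial}: excursions into a mesoscopic shell $A_x$. That is a reasonable alternative, but then the Poisson limit for the number of excursions has to come from quantitative mixing between successive entries into $A_x$, which you state as a goal but do not show how to prove. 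In particular, you cannot extract this from Theorem~\ref{teolimit}: the vacant set convergence only controls $\pr[x\notin X_{[0,t^*]}]$, i.e.\ the probability of zero excursions, not the full distribution of their number. The paper's per-interval hitting estimates (Lemmas~\ref{lemmamid}, \ref{lemmapi4}, \ref{lemmapiece}) are precisely the quantitative input that replaces this missing step.

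There is also a concrete issue with the shell itself. You take $x$ fixed and $L_n\to\infty$, $L_n=o(n)$, so for large $n$ the set $A_x=\{y:|y-x|\le L_n\}$ contains the origin. The conditional walk never hits $0$, so the entrance law into $A_x$ is badly degenerate near that boundary, and ``alternating visits to $A_x$ and a far region around the origin'' no longer describes the geometry: the walk can only approach $A_x$ from one direction on $\zp$-side, or come around the ring. One would have to use an asymmetric window, or keep its radius bounded, and in either case the entrance-law and coupling estimates change. The paper sidesteps all of this by starting the coupled excursion only \emph{after} the walk actually hits $x$, running the maximal coupling with the $\zp$-walk for a short time $T=n^\mu$, and then showing via an explicit total-variation computation using~\eqref{exact} that the coupling fails with probability $\ord(n^{-2}T^2)$, so that the truncated visit count $W_i$ converges to the full geometric visit count $V_i$. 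Your plan never specifies a comparable mechanism for matching the excursion's contribution to $T_i$, nor a bound ruling out anomalously long excursions --- you name the obstacle but do not resolve it.

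So: same high-level idea, but a different decomposition; and as written your argument is missing the quantitative per-interval (or per-excursion) hitting and coupling estimates that actually carry the paper's proof, and it has the shell-contains-origin problem that would need to be fixed before the rest can go through.
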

Observe that Theorem~\ref{teolimit} is also a corollary of Theorem~\ref{teolocal}. We opted to state the former one separately
because the proof of Theorem~\ref{teolimit} is much more
simple and straightforward than that of Theorem~\ref{teolocal}.

Plan of the paper: In section~2 we define capacity in one dimension and the conditional random walk, proving its martingale property. Then in section~2.3 we use these definitions to define the random interlacements process by means of the construction developed in \cite{teixeira} for random interlacements on weighted transient graphs. In section~2.4 we  define the local times of the process and calculate some important quantities related to them. In section~2.5 we  define the conditional walk on the ring graph (i.e. the torus in one dimension) and in section~2.6 we prove some auxiliary results  related to its law. In section~3 we  prove the three main theorems stated in the introduction.

\section{Preliminaries}
Unless stated otherwise, $(S_t)_{t \in \zp}$ will stand for the  simple random walk in one (or $d$) dimension(s) and $\pr_x$ will stand for the probability measure of the random walk starting in $x$. The inner boundary of a set $A\subset\zd$ is defined by $\partial A=\{x\in A\mid \|x-y\|_1=1 \mbox{ for some } y\notin A\}$ (as usual, $\|\cdot\|_p$
stands for the $L_p$-norm in $\mathbb{R}^d$). 
We also define the stopping times
  \[
 \tau_{A}=\inf\{t\geq 0\mid S_t \in A\}\quad\mbox{and}\quad\overline{\tau}_{A}=\inf\{t\geq 1\mid S_t \in A\}.
 \]
Abbreviate $\tau_{\{x\}}=\tau_x$ and $\overline{\tau}_{\{x\}}=\overline{\tau}_x$.  Define also the harmonic measure of $A$ as the entrance measure from infinity, so for $d\geq2$ it holds
 \[
 \hm_A(x)=\lim \limits_{|y|\to \infty}\pr_y[S_{\tau_A}=x].
 \]
 In dimension $d=1$ we have only two possible directions and then the entrance point have probabilities $1/2$ and $1/2$ on the two extremes of $A$.
 Finally, we define the diameter of a set by
 \[
 \diam(A)=\sup\limits_{x,y \in A}\|x-y\|_2
 \]
\subsection{Capacity in one dimension}
 To work with capacity in lower dimensions we need to make use of the potential kernel of the random walk. The potential kernel $a(x)$ for any random walk $X_t$ in $\zd$ is defined in section $4.4$ of \cite{rwb} by
  \[
  a(x)=\sum \limits_{k=0}^\infty \big(\pr[X_k=0]-\pr[X_k=x]\big).
  \]
 If the random walk is transient, then we can relate $a(x)$ to the Green function $G(x)$ by $a(x)=G(0)-G(x)$, but if the random walk is recurrent, the Green function does not exists. Theorem~$4.4.8$ of \cite{rwb} states that for simple random walk in one dimension the potential kernel is given by $a(x)=|x|$.

Now we define the capacity for one dimension, this definition is  analogous to the one of section~6.6 of~$\cite{rwb}$ for two dimensions and also used in~\cite{2dint}. The capacity of a set $A\subset \mathbb{Z}$ containing the origin can be defined as
\begin{equation}
\label {capdef}\cp(A):=\sum \limits_{x\in A} \hm_A(x)a(x-z)\mbox{, for any $z \in A$,}
\end{equation}
and for any other subset $B$ the capacity is given by the capacity of any translation of $B$ that contains the origin.

 As the harmonic measure can only be non null on the extremal points of a set, we have that $\cp(A)=\cp([\min A; \max A])$. Let us consider a interval $I=[-x,y]$ with both $x$,$y$ positive. Then by our definition of capacity using $z=0$:
 \[
\cp(A):= \hm_A(x)a(x)+\hm_A(y)a(y)=\frac{y+x}{2}.
\]
Here we used that if we start the walk from the positive side of $\mathbb{Z}$, then clearly it will enter~$I$ at~$y$ and the same can be said about the negative side and~$x$. This result and the translational invariance of    capacity imply that for any
finite subset~$A$ of~$\mathbb{Z}$
\[
\cp(A)=\frac{\diam(A)}{2}.
\]

\subsection{The conditional random walk}

Here we will construct random walks conditioned on never hitting $0$; since such a  walk never changes its sign, we can consider it only on $\zp$.
Let $x\geq 1$ be a positive integer. We start at $x$ a simple random walk $(X_t)_{t\geq 0}$ conditioned on not hitting $0$. To define the law of $X_t$ let us condition it on hitting $N$ before $0$ and take a limit in law. So for $x<N$ we have the following transition probabilities
\begin{align}
\label {crw}p_{x,x+1}&=1-p_{x,x-1}\\&=\pr_x[X_1=x+1|\tau_0 >\tau_N]\\
&=\frac{\pr_x[X_1=x+1]\pr_{x+1}[\tau_0 >\tau_N]}{\pr_{x+1}[\tau_0 >\tau_N]}\\&=\frac{x+1}{2x}.
\end{align}

A limit in $N$ takes off the restriction on $x$, and then  we obtain a law for any integer $x\geq 1$. It is interesting to compare this random walk with a simple random walk where we apply a Doob's $h$-transform using the potential kernel for one dimension $a(x)=|x|$.

The Doob's $h$-transform is applied using a function $h$ defined on the state space $S$ and a Markov chain with transitions $P(x,y)$, then the new transition probabilities $P^*(x,y)$ will be defined for all sites $x$ with $h(x)\neq0$ by
\[
P^*(x,y)=\frac{P(x,y)h(y)}{h(x)}.
\]

So when we apply the transform for the simple random walk $S_t$ with constant transition probabilities $P(x,x+1)=1/2$  we get a new random walk $X_t$ with transition probabilities given by
\[P^*(x,x+1)=\frac{a(x+1)P(x,x+1)}{a(x)}=\frac{x+1}{2x}
\]
The same walk could also be seen as a random walk with conductances on $\zp$, where the conductances are given by $c_{x,x+1}=a(x+1)=x+1$. 

 From now on $\cpr$ will stand for the probability 
measure of the conditional walk on~$\zp$. 

The following result helps a lot when doing calculations about the conditional random walk.

\begin{lemma}\label{martingale} Consider $(X_t)_{t\geq 0}$ a Markov chain in a space $S$ with transition probabilities $P(x,y)$.
 Let $(X^*_t)_{t\geq 0}$  be the Doob's h-transform of $(X_t)_{t\geq 0}$ with respect to a function $h$, which is nonnegative and harmonic outside the set $S^*=\{x \in S: \exists y\mbox{ with } P(x,y)>0\mbox{ and }h(y)=0\}$. Then the process  $((h(X^*_{t\wedge \tau_{S^*}}))^{-1})_{t\geq 0}$ with $h(X_0)\neq 0$ is a martingale.
\end{lemma}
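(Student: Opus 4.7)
The plan is a direct one-step calculation plus the standard stopping argument. First I would observe that on $S \setminus S^*$ the Doob transform is well-defined: by definition of $S^*$, if $x \notin S^*$ then $P(x,y) > 0$ implies $h(y) > 0$, so the ratio $P(x,y)h(y)/h(x)$ makes sense; and harmonicity of $h$ outside $S^*$ ensures these numbers sum to one. In particular, the chain $X^*_t$ can be run (without leaving the region $\{h > 0\}$) as long as $t < \tau_{S^*}$, so $1/h(X^*_{t\wedge\tau_{S^*}})$ is well-defined for every $t$.

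Next I would compute, for any $x \notin S^*$ with $h(x) \neq 0$,
\[
\ex\bigl[h(X^*_{t+1})^{-1} \,\big|\, X^*_t = x\bigr] = \sum_{y} P^*(x,y)\,\frac{1}{h(y)} = \sum_{y : P(x,y)>0} \frac{P(x,y)\,h(y)}{h(x)} \cdot \frac{1}{h(y)} = \frac{1}{h(x)},
\]
where the telescoping of $h(y)$ is the key point and uses precisely that $h(y) > 0$ whenever $P(x,y) > 0$ (i.e. $x \notin S^*$). This is already the martingale identity on the event $\{t < \tau_{S^*}\}$.

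To conclude for the stopped process, I would split: on $\{\tau_{S^*} \le t\}$ the stopped process is constant and the identity is immediate; on $\{\tau_{S^*} > t\}$ we have $X^*_t \notin S^*$ and the one-step calculation above applies with $X^*_{(t+1)\wedge\tau_{S^*}} = X^*_{t+1}$ when $\tau_{S^*} > t+1$ and $X^*_{(t+1)\wedge\tau_{S^*}} = X^*_{t+1}$ when $\tau_{S^*} = t+1$ (in both cases the value is $X^*_{t+1}$). Combining with the Markov property yields $\ex[h(X^*_{(t+1)\wedge\tau_{S^*}})^{-1} \mid \mathcal{F}_t] = h(X^*_{t\wedge\tau_{S^*}})^{-1}$. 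Integrability follows inductively since the process is non-negative and $\ex[M_t] = \ex[M_0] = 1/h(X_0) < \infty$.

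There is no real obstacle; the only subtle point is domain bookkeeping, namely verifying that $P^*(x,y)>0 \Rightarrow h(y)>0$ so the computation never divides by zero, and that stopping at $\tau_{S^*}$ is precisely what is needed to keep us in the region where harmonicity of $h$ yields the cancellation.
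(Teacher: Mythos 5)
Your proof is correct and follows essentially the same approach as the paper: a one-step conditional expectation where the Doob ratio $P(x,y)h(y)/h(x)$ telescopes against $1/h(y)$, leaving $\sum_y P(x,y)/h(x) = 1/h(x)$ precisely because, outside $S^*$, all accessible $y$ have $h(y)>0$. You are in fact a bit more careful than the paper's displayed computation, which runs the algebra without explicitly splitting on $\{\tau_{S^*}\le t\}$ versus $\{\tau_{S^*}>t\}$ and omits the integrability check; your explicit handling of both points tightens the argument without changing its substance.
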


\begin{proof}
Let us denote for all $x,y$ with $h(x)h(y)\neq 0$ the transition probabilities of $(X^*_t)_{t\geq 0}$ by $P^*(x,y)$.
Just a straightforward calculation here, being $\{\mathcal{F}_t\}_{t\geq0}$ the associated filtration for the process $((h(X_{t\wedge \tau_{S^*}}))^{-1})_{t\geq 0}$ we have:
\begin{align*}
\ex\left[(h(X^*_{(t+1)\wedge \tau_{S^*}}))^{-1}\vert \mathcal{F}_t\right]&=\sum \limits_{y\in S: h(y)\neq0} (h(y))^{-1}P^*(X^*_{t\wedge \tau_{S^*}},y)\\
&=\sum \limits_{y\in S: h(y)\neq0}\frac{1}{h(y)}\frac{P(X^*_{t\wedge \tau_{S^*}},y)h(y)}{h(X^*_{t\wedge \tau_{S^*}})}\\
&=\sum \limits_{y\in S: h(y)\neq0}\frac{P(X^*_{t\wedge \tau_{S^*}},y)}{h(X^*_{t\wedge \tau_{S^*}})}\\
&=(h(X^*_{t\wedge \tau_{S^*}}))^{-1}\pr[h(X^*_{(t+1)\wedge \tau_{S^*}})\neq0\mid \mathcal{F}_t]\\
&=(h(X^*_{t\wedge \tau_{S^*}}))^{-1}.
\end{align*}
Which complete the proof that the process is a martingale.
\end{proof}

For the conditional random walk $X_t$, we can use Lemma~\ref{martingale} to show that the process $\{(X_{t\wedge \tau_1})^{-1}\}_{t\geq 0}$ is a martingale. This fact will help us with calculations.

Next, we need
\begin{lemma}Let $X_t$ be the conditional random walk on $\zp$ started at $y$ with $N>y>x>1$. Then
\label{lemmacrw}
\begin{itemize}
\item[(i)] $\cpr_y[\tau_x<\tau_N]=\frac{x(N-y)}{y(N-x)}$,
\item[(ii)] $\cpr_y[\tau_x< \infty]=\frac{x}{y}$,
\item[(iii)] $\cpr_x[\overline{\tau}_x= \infty]=\frac{1}{2x}$.
\end{itemize}
\end{lemma}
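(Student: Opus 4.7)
The plan is to derive all three identities from the martingale $M_t:=(X_{t\wedge\tau_1})^{-1}$, whose martingale property is furnished by Lemma~\ref{martingale} applied to $h(z)=z$ (as observed immediately after that lemma). In each part the basic tool is optional stopping of $M$ at a suitably chosen hitting time.

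For (i), set $T:=\tau_x\wedge\tau_N$. Because $y\in(x,N)$ and $x>1$, the walk cannot reach $1$ before leaving $[x,N]$, so $T\leq\tau_1$ and $M_T=X_T^{-1}$ with $X_T\in\{x,N\}$; moreover $T$ is a.s.\ finite by the standard argument on a finite interval, and $M_{\cdot\wedge T}$ is bounded between $1/N$ and $1/x$. Optional stopping then yields
\[
\frac{1}{y}=\ex_y[M_T]=\frac{1}{x}\,\cpr_y[\tau_x<\tau_N]+\frac{1}{N}\,\cpr_y[\tau_x>\tau_N],
\]
and solving for $\cpr_y[\tau_x<\tau_N]$ produces the claimed formula. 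Part~(ii) then follows on letting $N\to\infty$: since the conditional walk is transient, $\tau_N\to\infty$ a.s., the events $\{\tau_x<\tau_N\}$ increase to $\{\tau_x<\infty\}$, and the explicit quantity $x(N-y)/(y(N-x))$ tends to $x/y$.

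For (iii), first-step analysis using the transition probabilities~(\ref{crw}) gives
\[
\cpr_x[\overline{\tau}_x=\infty]=\frac{x+1}{2x}\,\cpr_{x+1}[\tau_x=\infty]+\frac{x-1}{2x}\,\cpr_{x-1}[\tau_x=\infty].
\]
By~(ii), $\cpr_{x+1}[\tau_x=\infty]=1-x/(x+1)=1/(x+1)$. For the other term I claim $\cpr_{x-1}[\tau_x<\infty]=1$: transience drives the walk to $+\infty$, and its nearest-neighbor structure ensures that it crosses every level above its starting position; a possible initial descent to $1$ is handled by the strong Markov property, since $p_{1,2}=1$ sends the walk immediately to $2$ and one iterates to rule out avoidance of $x$ forever. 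Substituting gives $\cpr_x[\overline{\tau}_x=\infty]=\frac{x+1}{2x}\cdot\frac{1}{x+1}=\frac{1}{2x}$. The only step in the whole argument that is not automatic is the verification of $\cpr_{x-1}[\tau_x<\infty]=1$ in~(iii); everything else reduces to optional stopping plus a monotone limit.
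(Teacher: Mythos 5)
Your proposal is correct and takes essentially the same route as the paper. For (i) and (ii) you use the same martingale $(X_{t\wedge\tau_1})^{-1}$ with optional stopping at $\tau_x\wedge\tau_N$ and then a monotone limit in $N$, exactly as in the paper. For (iii) the paper compresses the first-step decomposition into the single clause ``the first step should be to $x+1$'', which implicitly assumes that the contribution from the step to $x-1$ vanishes because $\cpr_{x-1}[\tau_x<\infty]=1$; you write out both terms of the first-step analysis and justify this vanishing explicitly via transience and the nearest-neighbor structure (the walk is bounded below by $1$, drifts to $+\infty$, and therefore must cross level $x$). This is a slightly more careful rendering of the same argument, not a different method.
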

\begin{proof}
These are also very straightforward calculations using the optional stopping theorem. The first result comes from using the martingale $(X_{t\wedge \tau_1})^{-1}$ with the stopping time $\tau(N)\wedge \tau(x)$:
\[
\frac{1}{y}=\frac{1}{x}\cpr_y[\tau_x<\tau_N]+\frac{1}{N}(1-\cpr_y[\tau_x<\tau_N]),
\]
and then isolating the probability in the expression give us the desired result. For the second relation we just need to take limit in $N$, as clearly $\tau_N$ will diverge. For 
the last expression we observe that the first step should be to~$x+1$ and then
\begin{align*}
\cpr_x[\overline{\tau}_x= \infty]&=P(x,x+1)\cpr_{x+1}[\overline{\tau}_x= \infty]\\
&=\frac{x+1}{2x}\Big(1-\frac{x}{x+1}\Big)=\frac{1}{2x}.
\end{align*}
This concludes the proof.
\end{proof}

An important consequence of Lemma~\ref{lemmacrw}~(iii) is that the random walk conditioned on never hitting the origin is transient.

\subsection{Construction of the process}

To define the process, we will use the construction of~\cite{teixeira}, where the process of random interlacements is constructed for any weighted transient graph (i.e., a graph on which the random walk is transient). The graph considered here is $\mathbb{Z}$, so our weights (or conductances) $a_{x,y}$ are only positive if $|x-y|=1$. The conductances that generate the conditional random walk defined in $\eqref{crw}$ are
\[
a_{x,x+1}=a_{x+1,x}=\frac{x(x+1)}{2}.
\]
Then, the random walk on the graph with conductances is reversible with reversible measure $\mu_x:=a_{x,x+1}+a_{x,x-1}=x^2$, and its transition probabilities are 
\[
P(x,x+1)=\frac{a_{x,x+1}}{\mu_x}=\frac{x+1}{2x},
\]
as it should be. In accordance to~\cite{teixeira}, the capacity
 (denoted here by $\cpt(A)$) with respect to the conditional walk is defined by
\begin{equation}
\label{captex} \cpt(A)=\sum \limits_{x \in A} e_A(x),
\end{equation}
where $e_A(x)$ is the \textit{equilibrium measure} defined by
\[
e_A(x)=\ind[x\in A]\pr_x[\overline{\tau}_A=\infty]\mu_x.
\]
This definition uses the equilibrium measure of the conditional random walk, so its straightforward to see that for any finite $A\subset \mathbb{Z}$ we have $\cpt(A)=\cpt(A\cup \{0\})$.

We now show that for any set $A$ containing the origin we have
\[
\cpt(A)=\cp(A).
\]
Since the capacity of any finite set~$A$ is the same as the capacity of the shortest interval containing it, we can assume without loss of generality that $A=[a,b]$. In the definition of $\cp(A)$ we consider a set containing the origin, and for any other set we consider a translation of it containing the origin (this capacity is translation invariant). So we consider here $a<0$ and $b>0$, and then by Lemma~\ref{lemmacrw} we have
\begin{align}
\nonumber \cpt([a,b])&=\sum \limits_{x=a}^b e_{[a,b]}(x)\\
\nonumber&=e_{[a,b]}(a)+e_{[a,b]}(b)\\
\nonumber&=\pr_a[\overline{\tau}_{a}=\infty]a^2+\pr_b[\overline{\tau}_{b}=\infty]b^2\\
\nonumber&=\frac{b-a}{2}=\cp([a,b]).
\end{align}
With this we can relate both capacities by
\[
\cpt(A)=\cp(A\cup\{0\}).
\]
Let $W^*$ be the space of doubly infinite trajectories that spend a finite time in each finite set, the random interlacements process is defined as a Poison point process on the space $W^*\times\mathbb{R}^+$ with intensity by a measure $\nu \cdot \lambda$, where $\lambda$ is the Lebesgue measure on $\mathbb{R}^+$ and $\nu$ is a measure on $W^*$ characterized by
\[
\nu(\{\omega^* \in W^* :\omega(\mathbb{Z})\cap A \neq \O\})=\cpt(A).
\]
With this we can characterize the law of the process as 
\begin{align*}
\pr[A\subset \vac]&=\exp\{-\nu(\{\omega^* \in W^* :\omega(\mathbb{Z})\cap A \neq \O\})\times\lambda([0,\alpha])\}\\
&=\exp\{-\alpha \cpt(A)\}\\
\end{align*}
For a complete description of the construction see \cite{teixeira}.

One property of the construction that will be useful is that the number of trajectories that hit a set $A$ (we will denote it by $N_A$) in the random interlacements process at level $\alpha$ is such that
\[
N_A \sim {\rm Poisson}(\alpha \cp(A\cup\{0\})).
\]

\subsection{Local times}

The local time or occupation time of a transient random walk in site $x$ can be defined as the time the random walk spends at site $x$. The local time of the random interlacements process will be the sum of the local times of each trajectory. We know we have $N_{\{x\}}$ trajectories that hit $x$. For each of those trajectories, from a visit to $x$ a particle has a constant probability of escaping, making the local time of each particle a geometric random variable with success probability $(2x)^{-1}$ (see Lemma~\ref{lemmacrw}). So the random interlacements local time of $x$ is a compound Poisson variable 
\[
\ell(x)=\sum \limits_{k=1}^{N_{\{x\}}}V_k, 
\]
where $V_k$ are i.i.d. ${\rm Geometric((2x)^{-1})}$ with $\pr[V_k=j]=\frac{1}{2x}\big(1-\frac{1}{2x}\big)^{j-1}$.

\begin{lemma}
\label{lemmacharac}
 For any $x>0$ the characteristic function of $\ell(x)$ is
 \[
   \phi_{\ell(x)}(t)=\exp\left\{{\alpha x^2}\frac{(e^{it}-1)}{2x-(2x-1)e^{it}}\right\}.
 \]

\end{lemma}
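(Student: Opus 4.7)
The proof is essentially a computation combining two standard facts: the characteristic function of a $\mathrm{Geometric}(p)$ random variable supported on $\{1,2,\dots\}$, and the characteristic function of a compound Poisson sum.

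First I would compute the characteristic function of a single $V_k$. Since $V_k$ takes values in $\{1,2,\dots\}$ with $\pr[V_k=j]=\tfrac{1}{2x}\bigl(1-\tfrac{1}{2x}\bigr)^{j-1}$, summing the resulting geometric series gives
\[
\phi_V(t)=\sum_{j=1}^\infty e^{itj}\frac{1}{2x}\Bigl(1-\frac{1}{2x}\Bigr)^{j-1}=\frac{e^{it}}{2x-(2x-1)e^{it}},
\]
valid because $|1-1/(2x)|<1$. A short algebraic manipulation then yields
\[
\phi_V(t)-1=\frac{2x(e^{it}-1)}{2x-(2x-1)e^{it}}.
\]

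Next I would identify the parameter of $N_{\{x\}}$. As recorded just before the statement, $N_{\{x\}}\sim\mathrm{Poisson}(\alpha\cp(\{x\}\cup\{0\}))$, and by the explicit formula $\cp(A)=\diam(A)/2$ from section~2.1 we have $\cp(\{x\}\cup\{0\})=x/2$ for $x>0$. Thus $N_{\{x\}}\sim\mathrm{Poisson}(\alpha x/2)$.

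Finally I would apply the compound Poisson formula. Conditioning on $N_{\{x\}}=n$ and using the independence of the $V_k$,
\[
\phi_{\ell(x)}(t)=\ex\bigl[\phi_V(t)^{N_{\{x\}}}\bigr]=\exp\bigl\{\tfrac{\alpha x}{2}(\phi_V(t)-1)\bigr\},
\]
and substituting the expression for $\phi_V(t)-1$ obtained above yields
\[
\phi_{\ell(x)}(t)=\exp\Bigl\{\tfrac{\alpha x}{2}\cdot\tfrac{2x(e^{it}-1)}{2x-(2x-1)e^{it}}\Bigr\}=\exp\Bigl\{\alpha x^2\,\tfrac{e^{it}-1}{2x-(2x-1)e^{it}}\Bigr\},
\]
as claimed. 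There is no real obstacle here; the only thing to be careful about is correctly identifying the Poisson intensity as $\alpha x/2$ (not $\alpha x$ or $\alpha x^2$), which is why the factor $x^2$ appears in the final exponent after multiplying by the $2x$ from $\phi_V(t)-1$.
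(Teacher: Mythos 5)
Your proof is correct and takes essentially the same route as the paper: identify $N_{\{x\}}\sim\mathrm{Poisson}(\alpha x/2)$ via $\cp(\{0,x\})=x/2$, write $\ell(x)$ as a compound Poisson sum of $\mathrm{Geometric}((2x)^{-1})$ variables, and multiply the characteristic functions. The only difference is that you spell out the intermediate algebra that the paper compresses into a single ``straightforward calculation'' step.
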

\begin{proof}Straightforward calculation of the characteristic function of a compound Poisson of geometrics with $N_{\{x\}}$ being a Poisson variable with parameter $\alpha\cp(\{0,x\})=\alpha x/2$. The characteristic function of geometric variables is also well-known, so we have
 \begin{align}
\nonumber\phi_{\ell(x)}(t)&=\exp\left\{\frac{\alpha x}{2}\left(\frac{\frac{1}{2x}e^{it}}{1-\Big(1-\frac{1}{2x}\Big)e^{it}}-1\right)\right\}\\
\label{charac1} &=\exp\left\{{\alpha x^2}\frac{(e^{it}-1)}{2x-(2x-1)e^{it}}\right\}.
 \end{align}
 This characteristic function then defines the law of the local time $\ell(x)$.
\end{proof}

\subsection{Random walk on the ring graph}

In higher dimensions it is known that we can approximate the trace left by the random walk in a torus by the random interlacements process, main results about this can be found in \cite{torusint} and more recently for two-dimensional random interlacements in \cite{2dint}. Here we wish to establish the same relation in dimension one.

The one-dimensional torus is simply the ring graph, we will denote it by $R_n=(S_n,E_n)$, where $S_n=\{0,1,\ldots,n-1\}$ and $E_n=\{({0},{1});({1},{2});\ldots,({n-1},{0})\}$. Here we consider also a simple random walk conditioned on not hitting the origin until time $t$, we denote its law by $\rpr^t$ and its respective vacant set by $V_t$. This random walk can be seen as a random walk on~$\mathbb{Z}$ conditioned to not hitting $0$ and $n$. Let us define the quantity
\[
h_n(x,t)=\pr_x[\tau_{\{0,n\}}>t].
\]
Then the law for the conditioned walk that runs until time $s$ is given by
\begin{align*}
\rpr^s_x[X_1=x+1]&=1-\rpr^s_x[X_1=x-1]=\pr_x[X_1=x+1|\tau_{\{0,n\}}>s]\\
&=\frac{\pr_x[X_1=x+1,\tau_{\{0,n\}}>s]}{\pr_x[\tau_{\{0,n\}}>s]}\\
&=\frac{h_n(x+1,s-1)}{2h_n(x,s)},
\end{align*}
and then the probability of a path $\gamma$ of size $m$ starting at $x$ when the remaining time is $t$ is given by
\begin{equation}
\label{lawring}\rpr^t_{x}[\gamma]=\frac{h_n(\gamma_{m},t-m)}{2^{m}h_n(x,t)}.
\end{equation}
Observe also that the conditional random walk law for the same path (as it is always a valid path in $\zp$) is
\begin{equation}
\label{lawcond}\cpr_{x}[\gamma]=\frac{\gamma_{m}}{2^{m}x}.
\end{equation}

Now we need to understand better the asymptotic behavior of $h_n(x,t)$, this will be crucial in all the results we will prove.

\subsection{On the probability of not hitting~$0$ on the ring}

We now analyze $h_n(x,t)$. First we present an application of result 5.7 from chapter~XIV of~\cite{feller}.
\begin{lemma} Consider the simple random walk in $\mathbb{Z}$, if $0<x<n$ it holds that 
\label{lemmaruin}
\[
\pr_x[\tau_0<\tau_n,\,\tau_0=k ]=\frac{1}{n}\sum \limits_{j=1}^{n-1} \cos^{k-1}\Big(\frac{\pi j}{n}\Big) \sin\Big(\frac{\pi j}{n}\Big)\sin\Big(\frac{\pi x j}{n}\Big).
\]
\end{lemma}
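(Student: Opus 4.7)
My plan is to prove this via a spectral decomposition of the gambler's ruin transition matrix, which is the route Feller's 5.7 takes. Let $Q$ denote the substochastic $(n-1)\times(n-1)$ matrix obtained by restricting the simple random walk transition on $\{0,1,\ldots,n\}$ to the interior $\{1,\ldots,n-1\}$ (with $0$ and $n$ turned into absorbing states that are then deleted). Concretely $Q_{x,x\pm1}=1/2$ whenever both indices lie in the interior, and $Q_{x,y}=0$ otherwise. The point of $Q$ is that its $k$-th power exactly counts survival: $(Q^{k})_{x,y}=\pr_x[S_{k}=y,\,\tau_{\{0,n\}}>k]$.

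The first step is to diagonalise $Q$. I would check directly that for each $j=1,\ldots,n-1$ the vector $v_j(x)=\sin(\pi j x/n)$ is an eigenvector of $Q$ with eigenvalue $\cos(\pi j/n)$. Extending $v_j$ to $x=0$ and $x=n$ by $v_j(0)=v_j(n)=0$ (which is automatic and encodes the absorbing boundary), the product-to-sum identity
\[
\tfrac12\bigl[\sin(\pi j(x-1)/n)+\sin(\pi j(x+1)/n)\bigr]=\cos(\pi j/n)\sin(\pi j x/n)
\]
gives the claim at every interior $x$. The discrete Fourier orthogonality
\[
\sum_{x=1}^{n-1}\sin\!\tfrac{\pi j x}{n}\sin\!\tfrac{\pi j' x}{n}=\tfrac{n}{2}\,\delta_{j,j'}
\]
then produces the spectral expansion
\[
(Q^{k-1})_{x,y}=\frac{2}{n}\sum_{j=1}^{n-1}\cos^{k-1}\!\Big(\tfrac{\pi j}{n}\Big)\sin\!\Big(\tfrac{\pi j x}{n}\Big)\sin\!\Big(\tfrac{\pi j y}{n}\Big).
\]

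The second step is first-step analysis at the final instant. The event $\{\tau_0<\tau_n,\,\tau_0=k\}$ is identical to $\{S_{k-1}=1,\,\tau_{\{0,n\}}>k-1,\,S_k=0\}$, since to be absorbed at $0$ at time $k$ the walk must sit at site $1$ one step earlier (without prior absorption) and then step left. By the Markov property this conditional final step contributes a factor $1/2$, so
\[
\pr_x[\tau_0<\tau_n,\,\tau_0=k]=\tfrac12\,(Q^{k-1})_{x,1}.
\]
Substituting $y=1$ into the spectral formula yields $v_j(1)=\sin(\pi j/n)$ and the factor $2/n$ collapses with the $1/2$ to $1/n$, reproducing exactly the claimed identity.

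I do not anticipate a genuine obstacle: the only substantive check is the eigenvalue/eigenvector computation, which is a single trigonometric identity, and the orthogonality relation is standard. The slightly delicate point worth stating carefully is that the boundary condition $v_j(0)=v_j(n)=0$ is precisely what makes $v_j$ an honest eigenvector of the \emph{restricted} matrix $Q$; without this the three-term recurrence would fail at $x=1$ and $x=n-1$.
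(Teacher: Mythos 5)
Your proof is correct and complete. The paper itself does not prove this lemma—it simply invokes Feller, Chapter~XIV, result~5.7—and your spectral diagonalisation of the killed transition matrix $Q$ on $\{1,\ldots,n-1\}$ (eigenvectors $\sin(\pi j x/n)$ with eigenvalues $\cos(\pi j/n)$, the orthogonality normalisation $\sum_{x=1}^{n-1}\sin^2(\pi j x/n)=n/2$, and the last-step decomposition $\pr_x[\tau_0<\tau_n,\,\tau_0=k]=\tfrac12\,(Q^{k-1})_{x,1}$) is exactly the standard derivation that reference rests on, so this is essentially the same approach.
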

 The utility of these results comes from the fact that in includes the symmetric case $\pr_x[\tau_0>\tau_n,\,\tau_n=k ]=\pr_{n-x}[\tau_0<\tau_n,\,\tau_0=k ]$ and so we can write
\begin{equation}
\label{hr}\pr_x[\tau_{\{0,n\}}=k]=\pr_x[\tau_0<\tau_n,\,\tau_0=k ]+\pr_x[\tau_0>\tau_n,\,\tau_n=k ].
\end{equation}

With this we can obtain an expression for $h_n(x,t)$:
\begin{lemma} For any integer $x \in [0,n]$ it holds
\label{lemmah}
\[
h_n(x,t)=\frac{2}{n}\sum \limits_{j=1}^{\lfloor {n}/{2}\rfloor}\cos^{t}\Big(\frac{\pi (2j-1)}{n}\Big) \cot\Big(\frac{\pi (2j-1)}{2n}\Big)\sin\Big(\frac{\pi x (2j-1)}{n}\Big).
\]
\end{lemma}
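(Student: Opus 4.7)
The plan is to start from the formula in Lemma~\ref{lemmaruin} and the symmetry identity~\eqref{hr}, sum over $k>t$, and then simplify with standard trigonometric identities.

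First I would write
\[
h_n(x,t) = \sum_{k=t+1}^{\infty} \pr_x[\tau_{\{0,n\}}=k]
\]
and plug in the expression from~\eqref{hr}, which via Lemma~\ref{lemmaruin} gives the single sum
\[
\pr_x[\tau_{\{0,n\}}=k]
 = \frac{1}{n}\sum_{j=1}^{n-1}\cos^{k-1}\!\Big(\tfrac{\pi j}{n}\Big)\sin\!\Big(\tfrac{\pi j}{n}\Big)\!\left[\sin\!\Big(\tfrac{\pi x j}{n}\Big)+\sin\!\Big(\tfrac{\pi(n-x)j}{n}\Big)\right].
\]
The next step is to observe that $\sin\!\big(\pi(n-x)j/n\big)=-(-1)^{j}\sin\!\big(\pi x j/n\big)$, so the bracketed factor equals $2\sin(\pi x j/n)$ when $j$ is odd and vanishes when $j$ is even. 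This reduces the index set to $j=2i-1$ with $i=1,\ldots,\lfloor n/2\rfloor$ (one checks the upper bound in the even and odd cases for $n$), yielding
\[
\pr_x[\tau_{\{0,n\}}=k] = \frac{2}{n}\sum_{i=1}^{\lfloor n/2\rfloor}\cos^{k-1}\!\Big(\tfrac{\pi(2i-1)}{n}\Big)\sin\!\Big(\tfrac{\pi(2i-1)}{n}\Big)\sin\!\Big(\tfrac{\pi x(2i-1)}{n}\Big).
\]

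Then I would interchange the (finite) sum in $i$ with the geometric sum in $k$. For each $i$ the argument $\theta_i=\pi(2i-1)/n$ lies strictly between $0$ and $\pi$, so $|\cos\theta_i|<1$ and
\[
\sum_{k=t+1}^{\infty}\cos^{k-1}(\theta_i) = \frac{\cos^{t}(\theta_i)}{1-\cos(\theta_i)}.
\]
Finally, applying the half-angle identity
\[
\frac{\sin\theta}{1-\cos\theta} = \cot\!\Big(\tfrac{\theta}{2}\Big)
\]
to the factor $\sin(\theta_i)/(1-\cos(\theta_i))$ produces $\cot(\pi(2i-1)/(2n))$, which gives exactly the claimed formula.

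There is no real obstacle here beyond bookkeeping: the one spot that needs attention is the parity argument for collapsing the odd $j$'s into the indexing $j=2i-1$ with $i\le\lfloor n/2\rfloor$, and confirming that the geometric series converges for every one of those indices (which is automatic since $\theta_i\in(0,\pi)$). Everything else is a direct computation.
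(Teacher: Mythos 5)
Your proposal is correct and follows essentially the same route as the paper's proof: start from Lemma~\ref{lemmaruin} and the symmetry identity~\eqref{hr}, use $\sin(\pi(n-x)j/n)=(-1)^{j+1}\sin(\pi xj/n)$ to kill the even-$j$ terms, reindex over odd $j=2i-1$, swap the finite $i$-sum with the geometric tail sum in $k$, and finish with the half-angle identity $\sin\theta/(1-\cos\theta)=\cot(\theta/2)$. Your extra remarks on the upper bound $\lfloor n/2\rfloor$ and the convergence of the geometric series (since $\theta_i\in(0,\pi)$) are correct bookkeeping that the paper leaves implicit.
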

\begin{proof}
Observe that
\begin{align*}
\pr_x[\tau_0>\tau_n,\,\tau_n=k ]&=\pr_{n-x}[\tau_0<\tau_n,\,\tau_0=k ]\\
&=\frac{1}{n}\sum \limits_{j=1}^{n-1} \cos^{k-1}\Big(\frac{\pi j}{n}\Big) \sin\Big(\frac{\pi j}{n}\Big)\sin\Big(\frac{\pi (n-x) j}{n}\Big)\\
&=\frac{1}{n}\sum \limits_{j=1}^{n-1} (-1)^{j+1} \cos^{k-1}\Big(\frac{\pi j}{n}\Big) \sin\Big(\frac{\pi j}{n}\Big)\sin\Big(\frac{\pi x j}{n}\Big).
\end{align*}
So when we sum the probabilities in $\eqref{hr}$ and use Lemma~\ref{lemmaruin}, all terms with even $j$'s disappear and we get 
\begin{align*}
\pr_x[\tau_{\{0,n\}}=k]&=\pr_x[\tau_0<\tau_n,\,\tau_0=k ]+\pr_x[\tau_0>\tau_n,\,\tau_n=k ]\\
&=\frac{2}{n}\sum \limits_{j=1}^{\lfloor {n}/{2}\rfloor} \cos^{k-1}\Big(\frac{\pi (2j-1)}{n}\Big) \sin\Big(\frac{\pi (2j-1)}{n}\Big)\sin\Big(\frac{\pi x (2j-1)}{n}\Big),
\end{align*}
and then finally
\begin{align*}
h_n(x,t)&=\frac{2}{n}\sum \limits_{k=t+1}^\infty\sum \limits_{j=1}^{\lfloor {n}/{2}\rfloor} \cos^{k-1}\Big(\frac{\pi (2j-1)}{n}\Big) \sin\Big(\frac{\pi (2j-1)}{n}\Big)\sin\Big(\frac{\pi x (2j-1)}{n}\Big)\\
&= \frac{2}{n}\sum \limits_{j=1}^{\lfloor {n}/{2}\rfloor}\frac{\sin\Big(\frac{\pi (2j-1)}{n}\Big)}{1-\cos\Big(\frac{\pi (2j-1)}{n}\Big)}\sin\Big(\frac{\pi x (2j-1)}{n}\Big)  \cos^{t}\Big(\frac{\pi (2j-1)}{n}\Big)\\
&= \frac{2}{n}\sum \limits_{j=1}^{\lfloor {n}/{2}\rfloor}\cos^{t}\Big(\frac{\pi (2j-1)}{n}\Big) \cot\Big(\frac{\pi (2j-1)}{2n}\Big)\sin\Big(\frac{\pi x (2j-1)}{n}\Big).
\end{align*}

This concludes the proof.
\end{proof}

Our main concern now is to turn the expression in Lemma~\ref{lemmah} into something tractable. In order to do this we first need to demonstrate that the only term asymptotically relevant in the sum is the first one.

\begin{teo} If $t=t(n)$ satisfies
\begin{equation}
\label{cond}\liminf \limits_{n\to \infty}\frac{t}{n^2 \ln n}\geq \frac{4}{\pi^2},
\end{equation}
then the asymptotic behavior of $h_n(x,t)$ as $n\to \infty$ is
\label{asymp}
\begin{align*}
h_n(x,t)&=\Big(1+\ord(n^{-2})\Big)\frac{4}{\pi}\cos^{t}\Big(\frac{\pi }{n}\Big) \sin\Big(\frac{\pi x}{n}\Big)\\
&\sim \frac{4}{\pi}\cos^{t}\Big(\frac{\pi }{n}\Big) \sin\Big(\frac{\pi x}{n}\Big).
\end{align*}

\end{teo}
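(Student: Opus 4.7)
The strategy is to apply Lemma~\ref{lemmah} and isolate the $j=1$ summand as the dominant term, absorbing all other terms into the $\ord(n^{-2})$ relative error. The $j=1$ contribution equals $\frac{2}{n}\cos^t(\pi/n)\cot(\pi/(2n))\sin(\pi x/n)$; using the Taylor expansion $\cot(\pi/(2n))=\frac{2n}{\pi}(1+\ord(n^{-2}))$, this single term already matches the claimed asymptotic up to $\ord(n^{-2})$. It therefore suffices to show that the remaining sum over $j=2,\ldots,\lfloor n/2\rfloor$ contributes at most $\ord(n^{-2})\cdot\frac{4}{\pi}\cos^t(\pi/n)\sin(\pi x/n)$.

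I would partition the remaining indices according to whether the argument $\pi(2j-1)/n$ is close to $0$, close to $\pi$, or well in between. For ``middle'' indices with $3\leq 2j-1\leq n-3$ one has $|\cos(\pi(2j-1)/n)|\leq\cos(3\pi/n)$, so by the Taylor expansion $\log\cos\theta=-\theta^2/2+\ord(\theta^4)$ together with hypothesis~\eqref{cond},
\[
\left(\frac{\cos(3\pi/n)}{\cos(\pi/n)}\right)^t\leq\exp\Big(-\frac{4\pi^2 t}{n^2}+\ord(t/n^4)\Big)\leq n^{-16+o(1)}.
\]
Bounding each cotangent crudely by $\cot(3\pi/(2n))=\ord(n)$ and summing over the $\ord(n)$ middle indices produces a contribution of order at most $n^{-14+o(1)}\cos^t(\pi/n)$, which is much smaller than $n^{-2}$ times the leading term (note that for $1\leq x\leq n-1$ we have $\sin(\pi x/n)\geq\sin(\pi/n)=\Theta(n^{-1})$, so the ratio remains $\ord(n^{-2})$).

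The delicate case, and the main obstacle, is the ``edge'' index $2j-1=n-1$ that appears when $n$ is even, for which $|\cos(\pi(n-1)/n)|=\cos(\pi/n)$ and thus $|\cos|^t$ provides no decay at all. Here I invoke the reflection identities $\cos(\pi-\theta)=-\cos\theta$, $\cot(\pi/2-\theta)=\tan\theta$, and $\sin(\pi x-\pi x/n)=(-1)^{x+1}\sin(\pi x/n)$ for integer $x$, rewriting this term as $\pm\cos^t(\pi/n)\tan(\pi/(2n))\sin(\pi x/n)$. Since $\tan(\pi/(2n))=\ord(n^{-1})$, the overall $2/n$ prefactor yields a contribution of order exactly $n^{-2}\cos^t(\pi/n)\sin(\pi x/n)$, precisely matching the claimed $\ord(n^{-2})$ relative error. (When $n$ is odd the largest index is $2j-1=n-2$, for which the cosine ratio already decays like $n^{-6+o(1)}$, so no separate edge treatment is needed.) The required smallness of this edge term comes entirely from the trigonometric cancellation between cotangent and tangent, and indeed the $\ord(n^{-2})$ rate in the statement is tight because of it.
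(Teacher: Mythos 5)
Your proof is correct, and it takes a more careful route than the paper does — in fact it quietly repairs a gap in the paper's own argument. The paper bounds the tail $j\ge 2$ of the sum in Lemma~\ref{lemmah} by appealing to the monotonicity of $\cos$ and $\cot$ on $[0,\pi/2]$, concluding that each summand satisfies $\big\vert\cos^t(\pi(2j-1)/n)\big\vert\,\big\vert\cot(\pi(2j-1)/(2n))\big\vert\le \cos^t(2\pi/n)\cot(\pi/n)$. As you point out, this inequality is simply false for the largest index when $n$ is even: there $2j-1=n-1$ gives $\big\vert\cos(\pi(n-1)/n)\big\vert=\cos(\pi/n)>\cos(2\pi/n)$, and the claimed per-term bound (and consequently the paper's intermediate statement $h_n(x,t)=(1+\ord(ne^{-3t\pi^2/(2n^2)}))T_1$, its equation~\eqref{exact}) cannot hold, since this edge term contributes a relative error of order exactly $n^{-2}$ no matter how large $t$ is. Your split into middle indices $3\le 2j-1\le n-3$ (where hypothesis~\eqref{cond} together with $\log\cos\theta=-\theta^2/2+\ord(\theta^4)$ gives a polynomial saving from the $\cos^t$ ratio) and the single near-$\pi$ index (where the saving instead comes from $\cot(\pi/2-\theta)=\tan\theta=\ord(\theta)$, via the reflection identities you list) is the argument that genuinely establishes the stated $(1+\ord(n^{-2}))$ rate. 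Your closing remark that the edge index makes the $\ord(n^{-2})$ rate tight is also a correct and worthwhile observation, since it explains why the paper's~\eqref{exact}, which would allow an error decaying with $t$, cannot be right for even $n$. The only cosmetic imprecision is the separate-but-easy treatment of $2j-1=n-2$ for odd $n$, which you handle adequately; everything else is sound.
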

\begin{proof}
First we get an upper bound for the sum without the first term. Let us denote the first term by $T_1$; using the fact that $\cos$ and $\cot$ are decreasing functions on $[0,\pi/2]$ we have
\begin{align*}
\mid h_n(x,t)-T_1\mid&=\left\vert\frac{2}{n}\sum \limits_{j=2}^{\lfloor \frac{n}{2}\rfloor}\cos^{t}\Big(\frac{\pi (2j-1)}{n}\Big) \cot\Big(\frac{\pi (2j-1)}{2n}\Big)\sin\Big(\frac{\pi x (2j-1)}{n}\Big)\right\vert \\
&\leq \frac{2}{n}\sum \limits_{j=2}^{\lfloor \frac{n}{2}\rfloor}\left\vert\cos^{t}\Big(\frac{\pi (2j-1)}{n}\Big)\right\vert \left\vert\cot\Big(\frac{\pi (2j-1)}{2n}\Big)\right\vert \\
&\leq \frac{2}{n}\Big(\left\lfloor \frac{n}{2}\right\rfloor-1\Big) \left\vert\cos^{t}\Big(\frac{2\pi}{n}\Big)\right\vert \left\vert\cot\Big(\frac{\pi }{n}\Big)\right\vert \\
&\leq \left\vert\cos^{t}\Big(\frac{2\pi}{n}\Big)\right\vert \left\vert\cot\Big(\frac{\pi }{n}\Big)\right\vert.
\end{align*}
Dividing both sides by $T_1$ we get
\begin{align}
\label{bound3} \left\vert\frac{h_n(x,t)}{T_1}-1\right\vert&\leq \left\vert\frac{\cos^{t}\Big(\frac{2\pi}{n}\Big)}{\cos^{t}\Big(\frac{\pi}{n}\Big)}\right\vert \left\vert\frac{\cot\Big(\frac{\pi }{n}\Big)}{\cot\Big(\frac{\pi }{2n}\Big)}\right\vert\frac{1}{\left\vert \sin\Big(\frac{\pi x}{n}\Big)\right\vert}.
\end{align}
Let us study the asymptotic behavior of the right-hand side of $\eqref{bound3}$. For cosine we have when $x\to 0$ that $\cos(x)=e^{-x^2/2}(1+\ord(x^4))$ and for cotangent $\cot(x)=x^{-1}(1+\ord(x^2))$. Using this in the fractions we get
\begin{align*}
\frac{\cos^{t}\Big(\frac{2\pi}{n}\Big)}{\cos^{t}\Big(\frac{\pi}{n}\Big)}&=(1+\ord(tn^{-4}))e^{-{3t\pi^2}/{(2n^2)}}\quad\mbox{and}\quad\frac{\cot\Big(\frac{\pi }{n}\Big)}{\cot\Big(\frac{\pi }{2n}\Big)}=\frac{1}{2}(1+\ord(n^{-2})).
\end{align*}
With this
\begin{align*}
\left\vert\frac{h_n(x,t)}{T_1}-1\right\vert&\leq \left\vert\frac{\cos^{t}\Big(\frac{2\pi}{n}\Big)}{\cos^{t}\Big(\frac{\pi}{n}\Big)}\right\vert \left\vert\frac{\cot\Big(\frac{\pi }{n}\Big)}{\cot\Big(\frac{\pi }{2n}\Big)}\right\vert\frac{1}{\left\vert \sin\Big(\frac{\pi x}{n}\Big)\right\vert}\\
&=(1+\ord(tn^{-4})+\ord(x^2n^{-2}))\frac{1}{2\sin\Big(\frac{\pi x}{n}\Big)}e^{-{3t\pi^2}/{(2n^2)}}.
\end{align*}
If $x\sim cn$, then our bound is $\ord(e^{-{3t\pi^2}/{(2n^2)}})$ and if $x=o(n)$ then our bound is $\ord(nx^{-1}e^{-{3t\pi^2}/{(2n^2)}})$. As we do not need such refined results here, we will just work with the worst case bound, so
\begin{align*}
\left\vert\frac{h_n(x,t)}{T_1}-1\right\vert&= \ord(ne^{-{3t\pi^2}/{(2n^2)}}).
\end{align*}
Since for sufficiently large $n$ we have $\frac{3t\pi^2}{2n^2}\geq 3\ln n$, then ${n}e^{-{3t\pi^2}/{(2n^2)}}\leq n^{-2}$. Therefore
\begin{align}
\label{exact} h_n(x,t)&=(1+\ord(n e^{-{3t\pi^2}/{(2n^2)}}))T_1\\
\nonumber&=(1+\ord(n^{-2}))\frac{2}{n}\cos^{t}\Big(\frac{\pi}{n}\Big) \cot\Big(\frac{\pi}{2n}\Big)\sin\Big(\frac{\pi x}{n}\Big),
\end{align}
and again using the asymptotic expression of $\cot x$ we have 
\begin{align*}
\frac{2}{n}\cot\Big(\frac{\pi}{2n}\Big)=\frac{2}{n}\Big(\frac{2n}{\pi}+\ord (n^{-1})\Big)= \frac{4}{\pi}(1+\ord(n^{-2})).
\end{align*}
and this gives us the asymptotics for $h_n(x,t)$.
\end{proof}

This concludes the preliminaries we need in order to prove our main results.

\section{Proofs of the main results}
\subsection{Proof of Theorem~\ref{teolimit}}
We want to show that the random walk on the torus conditioned on not hitting the origin for a fixed time has as a limit the random interlacements process when we look at a fixed subset around the origin. The main question here is about how much time the conditional walk on the ring graph (the one-dimensional discrete torus) needs in order to match the random interlacements behavior. It turns out that here the time for the random interlacements convergence will be
\[
t_{\alpha,n}=\frac{\alpha n^3}{2\pi^2},
\]

Now let us begin the proof.
\begin{proof}[Proof of Theorem~\ref{teolimit}]
Although we already said which is the right value for $t_{n,\alpha}$, here we will work with a generic $t$ and then find the right value. The only assumption we have here is that $t$ should satisfy the condition of Theorem~\ref{asymp}.

Our aim here is to find a $t$ such that for any fixed interval $[-a,b]$ with $a,b>0$ we have for a starting point $x=\lfloor n/2\rfloor$ outside the interval
\begin{equation*}
  \rpr^{t}_x\big[[-a,b]\subset V_{t}\big] \to \exp\left\{-\frac{\alpha(a+b)}{2}\right\} .
\end{equation*}
The choice of $x=\lfloor n/2\rfloor$ is to keep the walk starting sufficiently away from the limit points of the interval so that the initial points do not affect the law of the vacant set. 
For this consider the conditional random walk on the ring graph as a random walk on $\mathbb{Z}$ conditioned on not hitting $0$ or $n$ for time $t$, so the site $-a$ will be equivalent of point $n-a$ in this walk.

With this we have
\begin{align}
  \nonumber \rpr^{t}_x\big[[-a,b]\subset V_{t}\big]&=\pr_x[\tau_{\{b,n-a\}}>t\mid \tau_{\{0,n\}}>t]\\
  &\label{fract1}=\frac{\pr_x[\tau_{\{b,n-a\}}>t]}{\pr_x[\tau_{\{0,n\}}>t]};
\end{align}
here we used that the simple random walk necessarily needs to hit $\{b,n-a\}$ in order to hit $\{0,n\}$. Now that we are working with the simple random walk, we can use its symmetry to rewrite the probability in the numerator of $\eqref{fract1}$:
\[
\pr_x[\tau_{\{b,n-a\}}>t]=\pr_{x-b}[\tau_{\{0,n-a-b\}}>t].
\]
Using this in $\eqref{fract1}$, we obtain
\begin{align*}
\rpr^{t}_x\big[[-a,b]\subset V_{t}\big]&=\frac{h_{n-a-b}(x-b,t)}{h_n(x,t)}.
\end{align*}
Now using Theorem~\ref{asymp} we have
\begin{align*}
  \nonumber \rpr^{t}_x\big[[-a,b]\subset V_{t}\big]&\sim\frac{\cos^{t}\Big(\frac{\pi }{n-a-b}\Big) \sin\Big(\frac{\pi (x-b)}{n-a-b}\Big)}{\cos^{t}\Big(\frac{\pi }{n}\Big) \sin\Big(\frac{\pi x}{n}\Big)}.
\end{align*}

As $x=\lfloor n/2\rfloor$, both sines in the above expression are asymptotic to $1$. Then, using again the asymptotic relation $\cos x \sim e^{-x^2/2}$ as $x\to 0$
\begin{align*}
  \nonumber \rpr^{t}_x\big[[-a,b]\subset V_{t}\big]&\sim\frac{\exp\Big\{\frac{-t\pi^2 }{2(n-a-b)^2}\Big\} }{\exp\Big\{\frac{-t\pi^2 }{2n^2}\Big\} }\\
    &\sim\exp\Big\{-\frac{t\pi^2(a+b)}{n^3}\Big\}.
\end{align*}

so  by continuity of the exponential function we need this exponent to be asymptotic to $\alpha(a+b)/2$, this means
\begin{equation}
\label{time}t\sim \frac{\alpha n^3}{a\pi^2}.
\end{equation}
 To finally conclude that everything we did was valid, we just need to see that this value of $t$ satisfies the condition on Theorem~\ref{asymp}, then for the value of $t$ in $\eqref{time}$ 
\begin{align*}
\liminf \limits_{n\to \infty}\frac{t}{n^2 \ln n}=\infty.
\end{align*}
This concludes the proof of Theorem~\ref{teolimit}, as for any fixed interval $[-a,b]$

\begin{equation*}
  \rpr^{{\alpha n^3}/{(a\pi^2)}}_z\big[[-a,b]\subset V_{\lfloor{\alpha n^3}/{(a\pi^2)}\rfloor}\big] \sim \exp\left\{-\frac{\alpha(a+b)}{2}\right\}.
\end{equation*}
Therefore,
\[
V_{\lfloor{\alpha n^3}/{(a\pi^2)}\rfloor}\lawlimit \vac,
\]
as desired.
\end{proof}
\subsection{Proof of Theorem~\ref{teotcl}}

Now we prove the central limit theorem for the local times. This will be done using the characteristic function of the local time together with the L\'evy's continuity theorem.

\begin{proof}[Proof of Theorem~\ref{teotcl}]
For the characteristic function of the local time, using Lemma~\ref{lemmacharac} we have
\[
  \phi_{\ell(x)}(t)=\exp\left\{{\alpha x^2}\frac{(e^{it}-1)}{2x-(2x-1)e^{it}}\right\}.
  \]
We need to study the asymptotic behavior of the exponent when $t\to 0$ and $x\to \infty$. Here we do not only need the main term, but also the error to see under which conditions the convergence holds. We first centralize our variable
 \begin{align}
  \nonumber\phi_{\ell(x)-\alpha x^2}(t)&=\exp\left\{{\alpha x^2}\frac{(e^{it}-1)}{2x-(2x-1)e^{it}}-\alpha x^2 it\right\}\\
\label{characzero}  &=\exp\left\{{\alpha x^2}\left(\frac{(e^{it}-1)}{2x-(2x-1)e^{it}}- it\right)\right\}.
  \end{align}
  Now we manipulate the term inside the brackets of $\eqref{characzero}$
   \begin{align}
\label{brack}\frac{(e^{it}-1)}{2x-(2x-1)e^{it}}- it&=\frac{(e^{it}-1)-it(2x-(2x-1)e^{it})}{2x-(2x-1)e^{it}},
  \end{align}
 and use the asymptotic expansion
\begin{align*}
e^{it}-1=it-\frac{t^2}{2}+\ord(t^3),
\end{align*}
and also
\begin{align*}
{2x-(2x-1)e^{it}}&=1+(2x-1)(1-e^{it})\\
&=1-\ord(xt).
\end{align*}
Then $\eqref{brack}$ becomes:
\begin{align*}
\frac{(e^{it}-1)-it(2x-(2x-1)e^{it})}{2x-(2x-1)e^{it}}&=\frac{(it-\frac{t^2}{2}+\ord(t^3))-it(2x-(2x-1)(1+it-\frac{t^2}{2}+\ord(t^3)))}{1-\ord(xt)}\\
&=-\frac{t^2}{2}(4x-1)+\ord(x^2t^3),
\end{align*}
and then, coming back to $\eqref{characzero}$,
 \begin{align}
  \nonumber\phi_{\ell(x)-\alpha x^2}(t)&=\exp\left\{{\alpha x^2}\left(-\frac{t^2}{2}(4x-1)+\ord(x^2t^3)\right)\right\}\\
  &=\exp\left\{-\frac{\alpha (4x-1) x^2t^2}{2}+\ord(x^4t^3)\right\}.
  \end{align}
Finally dividing the variable by the standard deviation  $\sqrt{\alpha(4x-1)}x$:
\begin{align*}
\phi_{\ell^*(x)}(t)&=\exp\left\{-\frac{t^2}{2}+\ord\left(\frac{t^3}{\sqrt{x}}\right)\right\},\mbox{ where }\ell^*(x)=\frac{\ell(x)-\alpha x^2}{\sqrt{\alpha(4x-1)}x}.
  \end{align*}
Then as $x\to \infty$ this characteristic function converges to the standard normal distribution one, and by the continuity theorem (see Theorem~$9.5.2$ of \cite{ppath}), we conclude our proof.
\end{proof}
\subsection{Proof of Theorem~\ref{teolocal}}

In order to prove Theorem~\ref{teolocal} we need more preliminaries. Again we will consider the conditional random walk in the ring graph as the simple random walk in $\mathbb{Z}$ conditioned to the event $\tau_{0,n}>t^*$, where $t^*=\alpha n^3/(2\pi^2)$ is the random interlacements convergence time.

\begin{lemma}
\label{lemmamid}
Consider the conditional random walk on the ring graph with $2n$ sites. Let $t=t(n)$ and $\Delta=\Delta(n)$ be such that $t$, $\Delta$ and $t-\Delta$ satisfy condition $\eqref{cond}$. For any $1<x<n$ it holds that
\[
\rpr^t_x[\tau_n>\Delta]\leq (1+\ord(\Delta n^{-4}))\frac{8}{\pi}\cos\Big(\frac{\pi x}{2n}\Big)\exp\Big\{-\frac{3\pi^2\Delta}{8n^2}\Big\}.
\]
\end{lemma}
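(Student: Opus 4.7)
The plan is to express $\rpr^t_x[\tau_n>\Delta]$ as a ratio of two unconditional simple random walk probabilities and then apply Theorem~\ref{asymp} three times together with the trivial inequality $\sin\leq 1$. Since the hypothesis $t>\Delta$ forces $\tau_0,\tau_{2n}>\Delta$ on $\{\tau_{\{0,2n\}}>t\}$, on the event $\{\tau_n>\Delta\}\cap\{\tau_{\{0,2n\}}>t\}$ the walk is confined to $\{1,\ldots,n-1\}$ throughout $[0,\Delta]$. Combining Bayes with the Markov property at time $\Delta$ then yields
\[
\rpr^t_x[\tau_n>\Delta] \;=\; \frac{\ex_x\!\bigl[\ind\{\tau_{\{0,n\}}>\Delta\}\,h_{2n}(X_\Delta,\,t-\Delta)\bigr]}{h_{2n}(x,t)}.
\]

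Next, Theorem~\ref{asymp} can be invoked in both directions to control the $h_{2n}$ terms: use it as an upper bound on $h_{2n}(X_\Delta,t-\Delta)$ and as a lower bound on $h_{2n}(x,t)$, both valid because $t$ and $t-\Delta$ satisfy \eqref{cond}. The $\frac{4}{\pi}$ prefactors and the factor $\cos^{t-\Delta}(\pi/(2n))$ partially cancel, reducing the problem, up to a multiplicative $(1+\ord(n^{-2}))$, to estimating
\[
\frac{\ex_x[\ind\{\tau_{\{0,n\}}>\Delta\}\sin(\pi X_\Delta/(2n))]}{\cos^\Delta(\pi/(2n))\sin(\pi x/(2n))}.
\]

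Since $\sin(\pi y/(2n))\leq 1$ for every $y\in\{1,\ldots,n-1\}$, the numerator expectation above is bounded by $\pr_x[\tau_{\{0,n\}}>\Delta]=h_n(x,\Delta)$, and a third application of Theorem~\ref{asymp} (valid because $\Delta$ also satisfies \eqref{cond}) gives $h_n(x,\Delta)\leq(1+\ord(n^{-2}))\frac{4}{\pi}\cos^\Delta(\pi/n)\sin(\pi x/n)$. Substituting and using the double-angle identity $\sin(\pi x/n)=2\sin(\pi x/(2n))\cos(\pi x/(2n))$ to eliminate the $\sin(\pi x/(2n))$ in the denominator produces
\[
\rpr^t_x[\tau_n>\Delta]\leq(1+\ord(n^{-2}))\,\frac{8}{\pi}\cos(\pi x/(2n))\,\frac{\cos^\Delta(\pi/n)}{\cos^\Delta(\pi/(2n))}.
\]
Finally, the two-term Taylor expansion $\cos\theta=e^{-\theta^2/2}(1+\ord(\theta^4))$ applied at $\theta=\pi/n$ and $\theta=\pi/(2n)$ and raised to the power $\Delta$ rewrites the cosine ratio as $(1+\ord(\Delta n^{-4}))\exp(-3\pi^2\Delta/(8n^2))$, delivering the stated bound.

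The proof is essentially a careful assembly of inequalities; the only things to watch are using the correct direction of Theorem~\ref{asymp} in each of the three invocations and absorbing the resulting $(1+\ord(n^{-2}))$ factors into the single $(1+\ord(\Delta n^{-4}))$ coming from the cosine Taylor step, which is legitimate because condition \eqref{cond} forces $\Delta$ to be at least of order $n^2\log n$, so that $n^{-2}=\ord(\Delta n^{-4})$.
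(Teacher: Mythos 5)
Your proposal is correct and follows the same route as the paper's own proof: both decompose $\rpr^t_x[\tau_n>\Delta]$ via the $\Delta$-step Markov property into the ratio $\ex_x[\ind\{\tau_{\{0,n\}}>\Delta\}\,h_{2n}(X_\Delta,t-\Delta)]/h_{2n}(x,t)$, apply Theorem~\ref{asymp} to the $h_{2n}$ terms, bound the resulting $\sin$-expectation by $h_n(x,\Delta)$ via $\sin\leq 1$, apply Theorem~\ref{asymp} once more, use the double-angle identity, and finish with the Gaussian expansion of cosine. Your final remark on absorbing the $(1+\ord(n^{-2}))$ factors into $(1+\ord(\Delta n^{-4}))$ using condition~\eqref{cond} is also the same bookkeeping the paper performs.
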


\begin{proof} Splitting the above probability into the sum of the probabilities of each path, we consider the set $\Gamma$ of paths which does not include the sites $0$ or $n$ and have length~$\Delta$ . Then
\begin{align*}
\rpr^t_x[\tau_n>\Delta]&=\sum \limits_{\gamma \in \Gamma} \rpr^t_x[\gamma]\\
&=\sum \limits_{\gamma \in \Gamma} \frac{h_{2n}(\gamma_\Delta ,t-\Delta)}{2^{\Delta}h_{2n}(x,t)}.
\end{align*}
As $t$ and $t-\Delta$ satisfy condition $\eqref{cond}$, we can use Theorem~\ref{asymp} to obtain that
\begin{align*}
\rpr^t_x[\tau_n>\Delta]&=(1+\ord(n^{-2}))\sum \limits_{\gamma \in \Gamma}\cos^{-\Delta}\Big(\frac{\pi }{2n}\Big) \frac{\sin\Big(\frac{\pi \gamma_\Delta}{2n}\Big)}{2^{\Delta}\sin\Big(\frac{\pi x}{2n}\Big)}\\
&=(1+\ord(n^{-2}))\frac{\cos^{-\Delta}\Big(\frac{\pi }{2n}\Big)}{\sin\Big(\frac{\pi x}{2n}\Big)}\ex_x\Big[ \sin\Big(\frac{\pi X_\Delta}{2n}\Big) \ind[\tau_{\{0,n\}}>\Delta]\Big]\\
&\leq(1+\ord(n^{-2}))\frac{\cos^{-\Delta}\Big(\frac{\pi }{2n}\Big)}{\sin\Big(\frac{\pi x}{2n}\Big)}h_n(x,\Delta).
\end{align*}
Now, as $\Delta$ also satisfies condition $\eqref{cond}$, we again use Theorem~\ref{asymp} and get 
\begin{align*}
\rpr^t_x[\tau_n>\Delta]&\leq(1+\ord(n^{-2}))\frac{4\sin\Big(\frac{\pi x}{n}\Big)}{\pi\sin\Big(\frac{\pi x}{2n}\Big)}\frac{\cos^{\Delta}\Big(\frac{\pi }{n}\Big)}{\cos^{\Delta}\Big(\frac{\pi }{2n}\Big)}\\
&=(1+\ord(n^{-2}))\frac{8}{\pi}\cos\Big(\frac{\pi x}{2n}\Big)\frac{\cos^{\Delta}\Big(\frac{\pi }{n}\Big)}{\cos^{\Delta}\Big(\frac{\pi }{2n}\Big)}.
\end{align*}
Using the asymptotic expansion of cosine $\cos(x)= e^{-x^2/2}(1+\ord(x^4))$ we have
\begin{align*}
\rpr^t_x[\tau_n>\Delta]&\leq(1+\ord(n^{-2}))(1+\ord(\Delta n^{-4}))\frac{8}{\pi}\cos\Big(\frac{\pi x}{2n}\Big)\exp\Big\{-\frac{\pi^2\Delta}{2}\Big(\frac{1}{n^2}-\frac{1}{4n^2}\Big)\Big\}\\
&=(1+\ord(\Delta n^{-4}))\frac{8}{\pi}\cos\Big(\frac{\pi x}{2n}\Big)\exp\Big\{-\frac{3\pi^2\Delta}{8n^2}\Big\}.
\end{align*}
This concludes the proof.
\end{proof}

\begin{lemma}
\label{lemmapi4}
  Let $\{X_t\}_{t\in \zp}$ be a simple random walk on $\mathbb{Z}$. Consider $\Delta=\Delta(n)$ satisfying condition $\eqref{cond}$. Then for any $a\in\{1,\ldots, n-1\}$ we have
\begin{equation*}
\ex_a\Big[\sin\Big(\frac{\pi X_\Delta}{n}\Big)\mid \tau_{\{0,n\}}>\Delta\Big]=(1+\ord(n^{-2})) \frac{\pi}{4}.
\end{equation*}
\end{lemma}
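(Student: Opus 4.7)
The plan is to rewrite the conditional expectation as the quotient
\[
\ex_a\Big[\sin\Big(\tfrac{\pi X_\Delta}{n}\Big)\,\big|\,\tau_{\{0,n\}}>\Delta\Big]
=\frac{\ex_a\Big[\sin\big(\tfrac{\pi X_\Delta}{n}\big)\ind[\tau_{\{0,n\}}>\Delta]\Big]}{h_n(a,\Delta)},
\]
and evaluate numerator and denominator separately. The denominator is handled directly by Theorem~\ref{asymp}, which under condition~\eqref{cond} gives
$h_n(a,\Delta)=(1+\ord(n^{-2}))\tfrac{4}{\pi}\cos^{\Delta}(\pi/n)\sin(\pi a/n)$.

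The key observation for the numerator is that $x\mapsto \sin(\pi x/n)$ is an eigenfunction of the simple-random-walk transition operator killed on $\{0,n\}$: for $1\le x\le n-1$,
\[
\tfrac{1}{2}\sin\Big(\tfrac{\pi(x+1)}{n}\Big)+\tfrac{1}{2}\sin\Big(\tfrac{\pi(x-1)}{n}\Big)=\cos\Big(\tfrac{\pi}{n}\Big)\sin\Big(\tfrac{\pi x}{n}\Big),
\]
and the function vanishes at the absorbing boundary $\{0,n\}$. Iterating this (equivalently, noting that $M_t=\cos^{-t}(\pi/n)\sin(\pi X_t/n)\ind[\tau_{\{0,n\}}>t]$ is a martingale and applying optional stopping at the bounded time $\Delta$) yields the exact identity
\[
\ex_a\Big[\sin\Big(\tfrac{\pi X_\Delta}{n}\Big)\ind[\tau_{\{0,n\}}>\Delta]\Big]=\cos^{\Delta}\Big(\tfrac{\pi}{n}\Big)\sin\Big(\tfrac{\pi a}{n}\Big).
\]
Dividing this by the asymptotic expression for $h_n(a,\Delta)$ the factors $\cos^\Delta(\pi/n)\sin(\pi a/n)$ cancel and leave $(1+\ord(n^{-2}))\pi/4$, which is the desired estimate.

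There is essentially no obstacle: the only non-trivial input is Theorem~\ref{asymp}, and the eigenfunction identity is a one-line trigonometric calculation. The only small point to check is that the cancellation is uniform in $a\in\{1,\dots,n-1\}$; this is automatic because Theorem~\ref{asymp} already gives a relative error of order $n^{-2}$ that does not depend on $a$, and the factor $\sin(\pi a/n)$ is the same in numerator and denominator and so simply cancels without contributing to the error.
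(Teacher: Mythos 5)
Your proof is correct, and it takes a genuinely different and cleaner route than the paper. You compute the numerator $\ex_a[\sin(\pi X_\Delta/n)\,\ind[\tau_{\{0,n\}}>\Delta]]$ \emph{exactly}, by observing that $x\mapsto\sin(\pi x/n)$ is an eigenfunction of the sub-Markov kernel of the simple random walk killed at $\{0,n\}$ with eigenvalue $\cos(\pi/n)$; the eigenvalue relation $\tfrac12[\sin(\pi(x+1)/n)+\sin(\pi(x-1)/n)]=\cos(\pi/n)\sin(\pi x/n)$ holds for every integer $x$, and the vanishing of $\sin(\pi\cdot 0/n)$ and $\sin(\pi n/n)$ makes killing at the boundary invisible, so iterating $\Delta$ times gives the closed form $\cos^{\Delta}(\pi/n)\sin(\pi a/n)$ with no error. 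You then need Theorem~\ref{asymp} only once, for the denominator $h_n(a,\Delta)$. The paper instead introduces an auxiliary time $t>\Delta$ with $t-\Delta$ also satisfying~\eqref{cond}, writes $\ex_a[h_n(X_\Delta,t-\Delta)\mid\tau_{\{0,n\}}>\Delta]=h_n(a,t)/h_n(a,\Delta)$ via the Markov property, and applies Theorem~\ref{asymp} three times (to $h_n(a,t)$, to $h_n(a,\Delta)$, and to $h_n(X_\Delta,t-\Delta)$ inside the expectation, implicitly relying on uniformity of the error in the starting point). Both arguments are valid, but yours is more elementary: it avoids the auxiliary $t$ altogether, replaces one asymptotic application by an exact identity, and makes transparent that the $\pi/4$ is precisely the reciprocal of the $4/\pi$ prefactor in Theorem~\ref{asymp}, since the leading factor $\cos^{\Delta}(\pi/n)\sin(\pi a/n)$ cancels exactly. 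It also sits nicely alongside Lemma~\ref{lemmaruin}, which already encodes the full spectral decomposition of the killed walk; you are just isolating its top mode. One could alternatively phrase your step as optional stopping of the martingale $\cos^{-t}(\pi/n)\sin(\pi X_{t\wedge\tau_{\{0,n\}}}/n)$ at the deterministic time $\Delta$, but the direct iteration you give is already airtight.
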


\begin{proof} 
Consider a quantity $t>\Delta$ such that $t-\Delta$ satisfies $\eqref{cond}$. Then we have 
\begin{align*}
h_n(a,t)&=\pr_a[\tau_{\{0,n\}}>t]\\
&=\pr_a[\tau_{\{0,n\}}>\Delta]\pr_a[\tau_{\{0,n\}}>t\mid\tau_{\{0,n\}}>\Delta].
\end{align*}
Using the Markov property we obtain
\begin{align*}
\pr_a[\tau_{\{0,n\}}>t\mid\tau_{\{0,n\}}>\Delta]&=\ex_a[\pr_{X_{\Delta}}[\tau_{\{0,n\}}>t-\Delta]\mid\tau_{\{0,n\}}>\Delta]\\
&=\ex_a[h_n(X_\Delta,t-\Delta)\mid\tau_{\{0,n\}}>\Delta].
\end{align*}
That gives the relation
\begin{align*}
h_n(a,t)=h_n(a,\Delta)\cdot\ex_a[h_n(X_\Delta,t-\Delta)\mid\tau_{\{0,n\}}>\Delta] \intertext{or, equivalently}
\ex_a[h_n(X_\Delta,t-\Delta)\mid\tau_{\{0,n\}}>\Delta]=\frac{h_n(a,t)}{h_n(a,\Delta)}.
\end{align*}
Here, as $t$, $\Delta$ and $t-\Delta$ satisfy $\eqref{cond}$, we can use Theorem~\ref{asymp} and obtain
\begin{align*}
\ex_a\Big[\frac{4}{\pi}\cos^{t-\Delta}\Big(\frac{\pi }{n}\Big) \sin\Big(\frac{\pi X_\Delta}{n}\Big)\mid\tau_{\{0,n\}}>\Delta\Big]=(1+\ord(n^{-2}))\frac{\cos^{t}\Big(\frac{\pi }{n}\Big) \sin\Big(\frac{\pi a}{n}\Big)}{\cos^{\Delta}\Big(\frac{\pi }{n}\Big) \sin\Big(\frac{\pi a}{n}\Big)}.
\end{align*}
Rearranging the terms, we obtain
\begin{align*}
\ex_a\Big[\sin\Big(\frac{\pi X_\Delta}{n}\Big)\mid\tau_{\{0,n\}}>\Delta\Big]=(1+\ord(n^{-2})) \frac{\pi}{4},
\end{align*} 
which concludes the proof of Lemma~\ref{lemmapi4}.
\end{proof}

\begin{lemma} Consider the conditional random walk on the ring graph with $2n$ sites, $\mathbb{Z}/2n\mathbb{Z}$. Assume that $x=x(n)$, $t=t(n)$ and $\Delta=\Delta(n)$ are such that $x=o(n)$ and both $\Delta$ and $t-\Delta$ satisfy $\eqref{cond}$. Then we have
\[
\rpr^t_n[\tau_x>\Delta]=(1+\ord(n^{-1})+\ord( n^{-4}\Delta))\exp\Big\{-\frac{\Delta x\pi^2}{8n^3}\Big\}.
\]
\label{lemmapiece}
\end{lemma}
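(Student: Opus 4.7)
The plan is to mimic the path-decomposition approach used for Lemma~\ref{lemmamid}, but now we need a precise asymptotic equality, not just an upper bound, so we must extract the leading eigenvalue contribution explicitly.

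First, apply the Markov property at time $\Delta$. Since the walk starts at $n$ and $x<n$ (implied by $x=o(n)$), any path avoiding $x$ for $\Delta$ steps automatically avoids $0$, so
\[
\rpr^t_n[\tau_x>\Delta] \;=\; \frac{\ex_n\bigl[\ind_{\tau_{\{x,2n\}}>\Delta}\,h_{2n}(X_\Delta,t-\Delta)\bigr]}{h_{2n}(n,t)}.
\]
Applying Theorem~\ref{asymp} to both the denominator (with $\sin(\pi/2)=1$) and $h_{2n}(\cdot,t-\Delta)$ inside the expectation reduces the problem to evaluating
\[
G:=\ex_n\bigl[\ind_{\tau_{\{x,2n\}}>\Delta}\sin(\pi X_\Delta/(2n))\bigr],
\qquad
\rpr^t_n[\tau_x>\Delta]=(1+\ord(n^{-2}))\,\frac{G}{\cos^\Delta(\pi/(2n))}.
\]

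Next, I compute $G$ spectrally. Translating by $-x$ identifies the walk from $n$ killed on $\{x,2n\}$ with the walk from $n-x$ on $\{0,\ldots,N\}$ killed at $\{0,N\}$, where $N:=2n-x$; using the same eigenbasis $\{\sin(\pi j u/N)\}_{j=1}^{N-1}$ as in Lemma~\ref{lemmah},
\[
G=\frac{2}{N}\sum_{j=1}^{N-1}\cos^\Delta(\pi j/N)\,\sin(\pi j(n-x)/N)\,S_j,
\quad
S_j=\sum_{u=1}^{N-1}\sin(\pi j u/N)\sin\bigl(\pi(u+x)/(2n)\bigr).
\]
Condition~(\ref{cond}) on $\Delta$ forces the ratio $\cos^\Delta(\pi j/N)/\cos^\Delta(\pi/(2n))$ to be $\ord(n^{-3/2})$ for every $j\geq 2$ (by the same cosine-power estimate used in the proof of Theorem~\ref{asymp}), so the $j\geq 2$ contributions are negligible. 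For the dominant $j=1$ term, expanding $\sin(\pi(u+x)/(2n))$ via the angle-sum identity and then applying product-to-sum rewrites $S_1$ as two closed-form Dirichlet-kernel sums, one of small frequency $\pi x/(2nN)$ and one near frequency $2\pi/N$; Taylor expansion of the resulting trigonometric ratios gives $S_1=(N/2)(1+\ord(x/n))$. Combining this with $\sin(\pi(n-x)/N)=\cos(\pi x/(2N))=1+\ord((x/n)^2)$ yields $G=(1+\ord(n^{-1}))\cos^\Delta(\pi/(2n-x))$.

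Finally, expanding $\cos^\Delta(\pi/(2n-x))/\cos^\Delta(\pi/(2n))$ via $\cos(y)=e^{-y^2/2}(1+\ord(y^4))$ together with the algebraic identity $(2n-x)^{-2}-(2n)^{-2}=x/(4n^3)(1+\ord(x/n))$ produces the factor $(1+\ord(\Delta n^{-4}))\exp\{-\Delta\pi^2 x/(8n^3)\}$, and multiplying through the three estimates gives the claim. The principal obstacle is the precise evaluation of $S_1$: the eigenfunction $\sin(\pi u/N)$ and the test function $\sin(\pi(u+x)/(2n))$ have slightly different periods and a nonzero phase offset, so one cannot directly identify the test function with the principal eigenfunction, and the leading constant $N/2$ together with its $\ord(x/n)$ correction must be tracked through the closed-form Dirichlet-kernel formulas to rule out spurious $\ord(1)$ contributions.
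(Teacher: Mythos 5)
Your overall route is genuinely different from the paper's. The paper first writes $\rpr^t_n[\tau_x>\Delta]=\frac{h_{2n-x}(n-x,\Delta)}{h_{2n}(n,t)}\,\ex_n[h_{2n}(X_\Delta,t-\Delta)\mid\tau_{\{x,2n\}}>\Delta]$, applies Theorem~\ref{asymp} to the two $h$'s, and then computes the conditional expectation via a translation and Lemma~\ref{lemmapi4} (which itself is a clean Markov-property argument). You instead expand $G=\ex_n[\ind_{\tau_{\{x,2n\}}>\Delta}\sin(\pi X_\Delta/(2n))]$ spectrally over the full Dirichlet eigenbasis on $\{0,\ldots,N\}$ with $N=2n-x$. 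The paper's route is arguably cleaner precisely because it never has to touch any eigenmode other than the leading one; yours requires controlling all of them.

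That control is where you have a genuine gap. You assert that condition~\eqref{cond} forces $\cos^\Delta(\pi j/N)/\cos^\Delta(\pi/(2n))=\ord(n^{-3/2})$ for every $j\geq 2$. This is false for $j$ near $N$: since $|\cos(\pi j/N)|=|\cos(\pi(N-j)/N)|$, the mode $j=N-1$ has $|\cos^\Delta(\pi(N-1)/N)|=\cos^\Delta(\pi/N)$, which is exactly the magnitude of the $j=1$ eigenvalue; dividing by $\cos^\Delta(\pi/(2n))$ yields $e^{-\Delta\pi^2 x/(8n^3)}$, i.e.\ the main term itself, not $\ord(n^{-3/2})$. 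Similarly the modes $j=N-2, N-3,\ldots$ have eigenvalue magnitudes $\cos^\Delta(2\pi/N),\cos^\Delta(3\pi/N),\ldots$, some of which exceed $\cos^\Delta(\pi/(2n))$ by only a small factor. The ``same cosine-power estimate as in Theorem~\ref{asymp}'' does not save you, for two reasons: (i) the bound on the cosine power in that proof is applied uniformly to a range that includes indices whose cosine has magnitude $\cos(\pi/n)$, so that step is itself loose, and (ii) more importantly the sum in Lemma~\ref{lemmah} carries an extra cotangent factor that vanishes near the middle of the spectrum and tames those terms, whereas your spectral sum for $G$ has no such compensating factor. To close the gap you must instead exploit that the Fourier coefficients $S_j$ are small for $j$ near $N$: the test function $\sin(\pi(u+x)/(2n))$ oscillates slowly, so its inner product with the rapidly alternating mode $\sin(\pi(N-k)u/N)=(-1)^{u+1}\sin(\pi k u/N)$ has extensive cancellation, giving $S_{N-k}=\ord(1)$ for bounded $k$ (and more generally bounded by the closed-form Dirichlet sums you mention). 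Combining $|S_{N-1}|=\ord(1)$ with the prefactor $\frac{2}{N}$ yields a contribution of relative size $\ord(n^{-1})$, consistent with the claimed error, but this must be argued explicitly; as written, your negligibility claim is simply wrong for a nonempty range of $j$.

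A secondary remark, not a disqualifying one: in your leading-order computation the error you obtain from $S_1=(N/2)(1+\ord(x/n))$ is $\ord(x/n)$, and the exponent $(2n-x)^{-2}-(2n)^{-2}$ carries an extra relative $\ord(x/n)$ beyond what is displayed. For $x$ fixed (which is the only regime used downstream) these are $\ord(n^{-1})$, matching the statement, but for general $x=o(n)$ they are not. The paper's own proof has the same implicit restriction, so this is a shared imprecision rather than a defect specific to your argument; still, it is worth flagging that the error constants depend on~$x$.
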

\begin{proof}
First we calculate the probability that there will be no visits to a fixed site $x$ in this interval by the random walk with initial site $n$:
\begin{align*}
\rpr^t_n[\tau_x>\Delta]&=\pr_n[\tau_x>\Delta\mid\tau_{\{0,2n\}}>t]\\
&=\frac{\pr_n[\tau_{\{x,2n\}}>\Delta]\pr_n[\tau_{\{0,2n\}}>t\mid \tau_{\{x,2n\}}>\Delta]}{\pr_n[\tau_{\{0,2n\}}>t]}\\
&=\frac{\pr_{n-x}[\tau_{\{0,2n-x\}}>\Delta]\pr_n[\tau_{\{0,2n\}}>t\mid \tau_{\{x,2n\}}>\Delta]}{\pr_n[\tau_{\{0,2n\}}>t]}\\
&=\frac{h_{2n-x}(n-x,\Delta)}{h_{2n}(n,t)}\pr_n[\tau_{\{0,2n\}}>t\mid \tau_{\{x,2n\}}>\Delta]\\
&=\frac{h_{2n-x}(n-x,\Delta)}{h_{2n}(n,t)}\ex_n[h_{2n}(X_\Delta,t-\Delta)\mid \tau_{\{x,2n\}}>\Delta].
\end{align*}
We can use Theorem~\ref{asymp} to obtain
\begin{align*}
&\frac{h_{2n-x}(n-x,\Delta)}{h_{2n}(n,t)}\ex_n[h_{2n}(X_\Delta,t-\Delta)\mid \tau_{\{x,2n\}}>\Delta]\\
&=(1+\ord(n^{-2}))\frac{\frac{4}{\pi}\cos^{\Delta}\Big(\frac{\pi}{2n-x}\Big)\sin\Big(\frac{\pi(n-x)}{2n-x}\Big)}{\frac{4}{\pi}\cos^{t}\Big(\frac{\pi}{2n}\Big)}\ex_n\Big[\frac{4}{\pi}\cos^{t-\Delta}\Big(\frac{\pi}{2n}\Big)\sin\Big(\frac{\pi X_\Delta}{2n}\Big)\mid \tau_{\{x,2n\}}>\Delta\Big]\\
&=(1+\ord(n^{-2}))\frac{4}{\pi}\left(\frac{\cos\Big(\frac{\pi}{2n-x}\Big)}{\cos\Big(\frac{\pi}{2n}\Big)}\right)^{\Delta}\sin\Big(\frac{\pi(n-x)}{2n-x}\Big)\ex_n\Big[\sin\Big(\frac{\pi X_\Delta}{2n}\Big)\mid \tau_{\{x,2n\}}>\Delta\Big].
\end{align*}
Again using $\cos x=e^{-\frac{x^2}{2}}(1+\ord(x^4))$ we have
\begin{align}
\nonumber (1+&\ord(n^{-2})))\frac{4}{\pi}\left(\frac{\cos\Big(\frac{\pi}{2n-x}\Big)}{\cos\Big(\frac{\pi}{2n}\Big)}\right)^{\Delta}\sin\Big(\frac{\pi(n-x)}{2n-x}\Big)\ex_n\Big[\sin\Big(\frac{\pi X_\Delta}{2n}\Big)\mid \tau_{\{x,2n\}}>\Delta\Big]\\
\nonumber &=(1+\ord(\Delta n^{-4}))\frac{4}{\pi}e^{-{\Delta\pi^2({(2n-x)^{-2}}-{4n^{-2}})}/{2}}\sin\Big(\frac{\pi(n-x)}{2n-x}\Big)\ex_n\Big[\sin\Big(\frac{\pi X_\Delta}{2n}\Big)\mid \tau_{\{x,2n\}}>\Delta\Big]\\
\label{rela4}&=(1+\ord(\Delta n^{-4}))\frac{4}{\pi}e^{-{\Delta x\pi^2}/{(8n^3)}}\ex_n\Big[\sin\Big(\frac{\pi X_\Delta}{2n}\Big)\mid \tau_{\{x,2n\}}>\Delta\Big].
\end{align}
An important point here is that this probability asymptotically does not depend on~$t$, just on $n$, $\Delta$ and $x$. Now, to work with the expectation, we will show that its value is asymptotically equal to $\pi/4$, using for this Lemma~\ref{lemmapi4}. Consider the set $\Gamma_{n,\Delta}$ of all paths started in $n$ and of length $\Delta$, so we can write the following expectation in terms of a sum of probabilities of paths and use translation invariance of simple random walk:
\begin{align}
\nonumber \ex_n\Big[\sin\Big(\frac{\pi X_\Delta}{2n}\Big)\mid \tau_{\{x,2n\}}>\Delta\Big]&=\sum \limits_{\gamma \in \Gamma_{n,\Delta}} \sin\Big(\frac{\pi \gamma_\Delta}{2n}\Big) \pr_n[\gamma \mid \tau_{\{x,2n\}}>\Delta]\\
\nonumber &=\sum \limits_{\gamma \in \Gamma_{n,\Delta}} \sin\Big(\frac{\pi \gamma_\Delta}{2n}\Big) \frac{\pr_n[\gamma, \tau_{\{x,2n\}}>\Delta]}{\pr_n[\tau_{\{x,2n\}}>\Delta]}\\
\nonumber &=\sum \limits_{\gamma \in \Gamma_{n-x,\Delta}} \sin\Big(\frac{\pi (x+\gamma_\Delta)}{2n}\Big) \frac{\pr_{n-x}[\gamma, \tau_{\{0,2n-x\}}>\Delta]}{\pr_{n-x}[\tau_{\{0,2n-x\}}>\Delta]}\\
\label{rela3}&=\ex_{n-x}\Big[\sin\Big(\frac{\pi (x+X_\Delta)}{2n}\Big)\mid \tau_{\{0,2n-x\}}>\Delta\Big].
\end{align}
We can write
\begin{align}
\nonumber\ex_{n-x}&\Big[\sin\Big(\frac{\pi (x+X_\Delta)}{2n}\Big)\mid \tau_{\{0,2n-x\}}>\Delta\Big]\\
\nonumber &=\sin\Big(\frac{\pi x}{2n}\Big)\ex_{n-x}\Big[\cos\Big(\frac{\pi X_\Delta}{2n}\Big)\mid \tau_{\{0,2n-x\}}>\Delta\Big]\nonumber\\
& \qquad{}+\cos\Big(\frac{\pi x}{2n}\Big)\ex_{n-x}\Big[\sin\Big(\frac{\pi X_\Delta}{2n}\Big)\mid \tau_{\{0,2n-x\}}>\Delta\Big]\\
\label{rela}&=\ord(n^{-1})+(1-\ord(n^{-2}))\ex_{n-x}[\sin\Big(\frac{\pi X_\Delta}{2n}\Big)\mid \tau_{\{0,2n-x\}}>\Delta].
\end{align}  
Now working with the sine in the expectation
\begin{align*}
\ex_{n-x}&\Big[\sin\Big(\frac{\pi X_\Delta}{2n}\Big)\mid \tau_{\{0,2n-x\}}>\Delta\Big]=\ex_{n-x}\Big[\sin\Big(\frac{\pi X_\Delta}{2n-x}\Big(1-\frac{x}{2n}\Big)\Big)\mid \tau_{\{0,2n-x\}}>\Delta\Big]\\
&=\ex_{n-x}\Big[\sin\Big(\frac{\pi X_\Delta}{2n-x}\Big)\cos\Big(\frac{\pi x X_\Delta}{2n(2n-x)}\Big)\mid \tau_{\{0,2n-x\}}>\Delta\Big]\\
&\quad-\ex_{n-x}\Big[\cos\Big(\frac{\pi X_\Delta}{2n-x}\Big)\sin\Big(\frac{\pi x X_\Delta}{2n(2n-x)}\Big)\mid \tau_{\{0,2n-x\}}>\Delta\Big].
\end{align*}  
As $0<X_\Delta<2n-x$ we have the following asymptotic behavior,
\begin{align*}
\cos\Big(\frac{\pi x X_\Delta}{2n(2n-x)}\Big)&= 1-\ord \Big(\frac{(\pi x X_\Delta)^2}{2(2n(2n-x))^2}\Big)\\
&=1-\ord(n^{-2}),\intertext{and}
\sin\Big(\frac{\pi x X_\Delta}{2n(2n-x)}\Big)&=\ord\Big(\frac{\pi x X_\Delta}{2n(2n-x)}\Big)\\
&=\ord(n^{-1}).
 \end{align*} 
 So with this we get
 \begin{align}
 \nonumber\ex_{n-x}&\Big[\sin\Big(\frac{\pi X_\Delta}{2n}\Big)\mid \tau_{\{0,2n-x\}}>\Delta\Big]\\
\label{rela2}&=(1-\ord(n^{-2}))\ex_{n-x}\Big[\sin\Big(\frac{\pi X_\Delta}{2n-x}\Big)\mid \tau_{\{0,2n-x\}}>\Delta\Big]+\ord(n^{-1}).
\end{align}  
By Lemma~\ref{lemmapi4} we have
\[
\ex_{n-x}\Big[\sin\Big(\frac{\pi X_\Delta}{2n-x}\Big)\mid \tau_{\{0,2n-x\}}>\Delta\Big]=\frac{\pi}{4}(1+\ord(n^{-2})) .
\]
 Then using this in $\eqref{rela2}$ we get
  \begin{align}
 \nonumber\ex_{n-x}&\Big[\sin\Big(\frac{\pi X_\Delta}{2n}\Big)\mid \tau_{\{0,2n-x\}}>\Delta\Big]= \frac{\pi}{4}(1+\ord(n^{-1})).
\end{align}  
Finally, using this in $\eqref{rela}$ and then in $\eqref{rela3}$ we get 
 \begin{align}
\label{sinerelation} \ex_n\Big[\sin\Big(\frac{\pi X_\Delta}{2n}\Big)\mid \tau_{\{x,2n\}}>\Delta\Big]=(1+\ord(n^{-1})+\frac{\pi}{4}\ord(\Delta n^{-4})) .
\end{align}  
Using this in $\eqref{rela4}$, we conclude the proof of Lemma~\ref{lemmapiece}.
\end{proof}

\begin{lemma} Suppose $x>1$ is fixed and $y^2=o(\Delta)$. It holds that
\label{lemmaend}
\begin{align*}
\cpr_x[X_\Delta\leq y]=\pr_x[X_\Delta\leq y\mid \tau_0=\infty]=\sqrt{\frac{2}{\pi}}\frac{y^{3}}{3\Delta^{{3}/{2}}}(1+\ord(y^2\Delta^{-1})).
\end{align*}

\end{lemma}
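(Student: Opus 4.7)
The plan is to convert the probability under $\cpr$ to a sum under the plain simple random walk using the path--law identity~$\eqref{lawcond}$, and then extract the asymptotics via the reflection principle and the local central limit theorem.

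First, since a length--$\Delta$ nearest--neighbour path $\gamma$ starting at $x$ and staying in $\zp$ has $\cpr_x[\gamma]=\gamma_\Delta/(2^\Delta x)$, summing over paths with $\gamma_\Delta=z$ yields
\[
\cpr_x[X_\Delta \leq y] \;=\; \frac{1}{x}\sum_{z=1}^{\lfloor y\rfloor} z\,\pr_x[X_\Delta = z,\,\tau_0 > \Delta],
\]
where $\pr_x$ on the right refers to the unconditioned simple random walk. The same quantity equals $\lim_{N\to\infty}\pr_x[X_\Delta\leq y\mid \tau_0>\tau_N]$, so the notation $\pr_x[\,\cdot\mid\tau_0=\infty]$ is indeed consistent with $\cpr_x$.

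Next I would apply the reflection principle to write
\[
\pr_x[X_\Delta = z,\,\tau_0 > \Delta] \;=\; \pr_x[X_\Delta = z] - \pr_x[X_\Delta = -z],
\]
and invoke the local central limit theorem in the sharp form
\[
\pr_x[X_\Delta = w] \;=\; \sqrt{\tfrac{2}{\pi\Delta}}\exp\!\Big(-\tfrac{(w-x)^2}{2\Delta}\Big)\bigl(1 + \ord(\Delta^{-1})\bigr)
\]
for $w$ of the parity of $x+\Delta$. Using $(z+x)^2-(z-x)^2=4xz$ and expanding the resulting pair of exponentials around $1$ (valid since $x$ is fixed and $z\leq y$ with $y^2=o(\Delta)$), the difference collapses to
\[
\pr_x[X_\Delta = z,\,\tau_0 > \Delta] \;=\; \sqrt{\tfrac{2}{\pi\Delta}}\cdot\tfrac{2xz}{\Delta}\bigl(1+\ord(y^2/\Delta)\bigr)
\]
for $z$ of the correct parity in $[1,y]$ and zero otherwise.

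Finally I would carry out the summation. Because only every other integer contributes, $\sum z^2 = y^3/6\,(1+\ord(y^{-1}))$, and collecting everything yields
\[
\cpr_x[X_\Delta \leq y] \;=\; \frac{1}{x}\sqrt{\tfrac{2}{\pi\Delta}}\cdot\tfrac{2x}{\Delta}\cdot\tfrac{y^3}{6}\bigl(1+\ord(y^2/\Delta)\bigr) \;=\; \sqrt{\tfrac{2}{\pi}}\,\tfrac{y^3}{3\Delta^{3/2}}\bigl(1+\ord(y^2/\Delta)\bigr),
\]
as claimed. The hardest part is the bookkeeping: one must verify that the local CLT error $\ord(\Delta^{-1})$, the Taylor--expansion error $\ord(y^2/\Delta)$, and the parity--summation error all combine into the single error $\ord(y^2/\Delta)$ stated; the first is clearly dominated by the expansion error since $y\geq 1$, and the parity error is of lower order in the relevant regime $y\to\infty$ (for bounded $y$ the bound is trivial anyway). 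Everything else is routine.
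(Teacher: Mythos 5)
Your proof is correct and follows the same skeleton as the paper's: decompose the event according to the endpoint $z=X_\Delta$, convert the conditioned probability to an unconditioned simple-random-walk quantity via the path-law identity $\cpr_x[\gamma]=\gamma_\Delta/(2^\Delta x)$, apply the reflection principle to remove the constraint $\tau_0>\Delta$, estimate asymptotically, and sum over $z$. The one genuine difference is in the middle step: the paper writes the path count as a difference of binomial coefficients and grinds through Stirling and Gamma-ratio expansions by hand, while you invoke the sharp local CLT and Taylor-expand the difference of Gaussians. Your route is more modular, and it has the side benefit of treating all parities of $x$ and $\Delta$ uniformly, whereas the paper restricts to even $x$ (and even $\Delta$) and then patches up the general case with a separate monotonicity argument in $x$ and $\Delta$. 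You are also right to flag the extra $O(1/y)$ contribution coming from summing over only every other integer; the paper's own summation $\sum_{k\le y/2}2k(4k+1)=\tfrac{y^3}{3}\bigl(1+O(1/y)\bigr)$ carries the same hidden term, which is harmless in the intended application (there $y=n^{\mu/3}\to\infty$, so $O(1/y)$ and $O(y^2/\Delta)$ coincide) but is indeed not subsumed by the stated $O(y^2/\Delta)$ for arbitrary $y$ satisfying $y^2=o(\Delta)$. So this is a slight imprecision in the lemma's error term rather than a gap in your argument.
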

\begin{proof} 
Let us calculate this probability by splitting it according to the endpoint of the paths:
\begin{align}
\label{cprend}\cpr_x[X_\Delta\leq y]=\sum \limits_{k=1}^y \frac{k}{x2^\Delta}n(\gamma:\gamma_0=x,\, \gamma_i\neq 0,\, \gamma_\Delta=y).
\end{align}
We need to estimate $N_k:=n(\gamma:\gamma_0=x,\, \gamma_i\neq 0,\, \gamma_\Delta=k)$.
In this sum we have some problems with parity. If $\Delta$ is even, then both $x$ and $\gamma_\Delta$ need to have the same parity. Otherwise they need to have opposite parity. As we are mostly interested in asymptotic results, let us assume that $\Delta$  even and so $x$ and $k$ are of the same parity.

Here we have paths from $(0,x)$ to $(\Delta, y)$  that do not touch the line $y=0$. These value can be explicitly calculated using the reflection principle (see section~1 of chapter~III of~\cite{feller}), so we have:
\begin{align*}
N_k&=\binom{\Delta}{\frac{\Delta+k-x}{2}}-\binom{\Delta}{\frac{\Delta-x-k}{2}}\\
&=\binom{\Delta}{\frac{\Delta+k-x}{2}}\left(1-\frac{\Big(\frac{\Delta+k-x}{2}\Big)!\Big(\frac{\Delta-k+x}{2}\Big)!}{\Big(\frac{\Delta+k+x}{2}\Big)!\Big(\frac{\Delta-k-x}{2}\Big)!}\right).
\end{align*}
We have two fractions to work with, so as $\Delta+k$ goes to infinity, we use the asymptotic expansions valid for any real constant $a$:
\begin{align*}
\frac{\Gamma(n+a+1)}{\Gamma(n-a+1)}=n^{2a}\Big(1+\frac{a}{n}+\ord(n^{-2})\Big).
\end{align*}
Then 
\begin{align}
\nonumber N_k&=\binom{\Delta}{\frac{\Delta+k-x}{2}}\left(1-\Big(\frac{\Delta-k}{\Delta+k}\Big)^{x}\Big(1-\frac{x}{\Delta+k}+\ord(\Delta^{-2})\Big)\right)\\
\nonumber &=\binom{\Delta}{\frac{\Delta+k-x}{2}}\left(1-\Big(1-\frac{2kx}{\Delta+k}+\ord(k^2\Delta^{-2})\Big)\Big(1-\frac{x}{\Delta+k}+\ord(\Delta^{-2})\Big)\right)\\
\label{factorial1}&=\binom{\Delta}{\frac{\Delta+k-x}{2}}\left(\frac{(2k+1)x}{\Delta}+\ord(k^2\Delta^{-2})\right).
\end{align}
Using the Stirling approximation $n!=\sqrt{2\pi n} (n/e)^n(1+\ord(n^{-1}))$ we can work with the asymptotic expression of the remaining binomial term:
\begin{align*}
\binom{\Delta}{\frac{\Delta+k-x}{2}}&=\frac{\Delta!}{\frac{\Delta+k-x}{2}!\frac{\Delta-k+x}{2}!}\\
&=\frac{\sqrt{2\pi \Delta}\Big(\frac{\Delta}{e}\Big)^{\Delta}}{\pi\sqrt{ \Delta^2-(k-x)^2}\Big(\frac{\Delta+k-x}{2e}\Big)^{{(\Delta+k-x)}/{2}}\Big(\frac{\Delta-k+x}{2e}\Big)^{{(\Delta-k+x)}/{2}}}(1+\ord(\Delta^{-1}))\\
&=2^{\Delta}\sqrt{\frac{2}{\pi\Delta}}(1+\ord(k^2\Delta^{-1})).
\end{align*}
Now using this in $\eqref{factorial1}$ we get
\begin{align}
\label{nk} N_k&=2^{\Delta}\sqrt{\frac{2}{\pi}}\frac{(2k+1)x}{\Delta^{{3}/{2}}}(1+\ord(k^2\Delta^{-1})).
\end{align}
We want to use $\eqref{nk}$  in $\eqref{cprend}$, but we need to worry about the parity before. First, if $x$ is even we get
\begin{align}
\nonumber \cpr_x[X_\Delta\leq y]&=\sqrt{\frac{2}{\pi}}\frac{x}{x\Delta^{{3}/{2}}}\sum \limits_{k=1}^{\lfloor y/2\rfloor} {2k(4k+1)}(1+\ord(k^2\Delta^{-1}))\\
\label{result1} &=\sqrt{\frac{2}{\pi}}\frac{y^{3}}{3\Delta^{{3}/{2}}}(1+\ord(y^2\Delta^{-1})).
\end{align}
It is straightforward to see that $\cpr_x[X_\Delta\leq y]$ is decreasing in $x$. This monotonicity property allow us to extend $\eqref{result1}$ to any value of $x$. The same argument can be used for $\Delta$, as the positive drift makes $\cpr_x[X_\Delta\leq y]$ also decreasing in $\Delta$. This makes this asymptotic expression valid for all $x$, $y$, and $\Delta$ satisfying the condition in the Lemma, which concludes the proof of Lemma~\ref{lemmaend}.
\end{proof}

\begin{coro} For a fixed $x>0$, consider quantities $\Delta$ and $y$  possibly depending on~$n$ such that $\Delta=o(n^2)$, $y^2=o(\Delta)$, $y=o(n)$ and $t$ satisfy Condition $\eqref{cond}$. Then 
\label{coroend}
\begin{align*}
\rpr^t_x[X_\Delta\leq y]=(1+\ord(\Delta n^{-2})+\ord(y^2\Delta^{-1}))\sqrt{\frac{2}{\pi}}\frac{y^{3}}{3\Delta^{{3}/{2}}}.
\end{align*}

\end{coro}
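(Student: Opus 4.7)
The strategy is a path-by-path comparison between the ring measure $\rpr^t_x$ and the conditional measure $\cpr_x$ of Lemma~\ref{lemmaend}, with the extra absorption at $n$ introducing only a negligible correction. For any path $\gamma$ of length $\Delta$ from $x$ to $\gamma_\Delta=k\leq y$ that avoids $\{0,n\}$, formulas~\eqref{lawring} and~\eqref{lawcond} give
\[
\frac{\rpr^t_x[\gamma]}{\cpr_x[\gamma]} = \frac{x}{k}\cdot\frac{h_n(k,t-\Delta)}{h_n(x,t)}.
\]
Since $\Delta = o(n^2)$, both $t$ and $t-\Delta$ satisfy~\eqref{cond}, so Theorem~\ref{asymp} applies to both $h_n$-values. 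Expanding $\cos^{-\Delta}(\pi/n) = 1+\ord(\Delta n^{-2})$ (valid for $\Delta = o(n^2)$) together with $\sin(\pi k/n)/\sin(\pi x/n) = (k/x)(1+\ord(y^2 n^{-2}))$, and noting that $y^2 n^{-2} = (y^2/\Delta)(\Delta/n^2) = o(\Delta n^{-2})$, the ratio reduces to $1+\ord(\Delta n^{-2})$ uniformly in $k\leq y$. Summing over all valid paths $\gamma$ yields
\[
\rpr^t_x[X_\Delta\leq y] = (1+\ord(\Delta n^{-2}))\,\cpr_x[X_\Delta\leq y,\,\tau_n>\Delta].
\]

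To remove the extra restriction $\tau_n > \Delta$, I use the $h$-transform identity $\cpr_x[A] = x^{-1}\ex_x[X_\Delta\ind_A\ind_{\tau_0>\Delta}]$ for $A\in\mathcal{F}_\Delta$, which on the event $\{X_\Delta\leq y\}$ gives
\[
\cpr_x[X_\Delta\leq y,\,\tau_n\leq\Delta] \leq \frac{y}{x}\,\pr_x[X_\Delta\leq y,\,\tau_n\leq\Delta].
\]
The reflection principle for simple random walk at level $n$ yields $\pr_x[S_\Delta=k,\,\tau_n\leq\Delta]=\pr_x[S_\Delta=2n-k]$, and the local central limit theorem bounds each such term by $C\Delta^{-1/2}\exp(-(2n-k-x)^2/(2\Delta))$. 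Summing over $k\in\{1,\dots,y\}$ produces a bound of order $y\,\Delta^{-1/2}\exp(-n^2/(2\Delta))$, which is super-polynomially small compared to $y^3/\Delta^{3/2}$ under $\Delta = o(n^2)$, and is therefore absorbed into the $\ord(\Delta n^{-2})$ error.

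Finally, Lemma~\ref{lemmaend} gives $\cpr_x[X_\Delta\leq y] = (1+\ord(y^2\Delta^{-1}))\sqrt{2/\pi}\,y^{3}/(3\Delta^{3/2})$, and combining the three steps yields the claim. The main obstacle is the error bookkeeping: the hypothesis $y^2 = o(\Delta)$ is needed to absorb the sine-ratio error $y^2 n^{-2}$ into the $\cos^\Delta$ error $\Delta n^{-2}$, and the super-polynomial decay from the reflection estimate must dominate the stated errors across the full parameter range permitted by the hypotheses, which is what makes the gap $\Delta = o(n^2)$ essential.
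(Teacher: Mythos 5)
Your proof takes essentially the same route as the paper's: compare the ring law to the conditional law path by path via formulas~\eqref{lawring} and~\eqref{lawcond}, apply Theorem~\ref{asymp} to the two $h_n$-terms to reduce the ratio to $1+\ord(\Delta n^{-2})$, and conclude with Lemma~\ref{lemmaend}. The one place you go further is in explicitly bounding the contribution of paths that hit $n$ before time $\Delta$ (by reflection at level $n$, yielding a super-polynomially small correction); the paper silently identifies the sum over $\{0,n\}$-avoiding paths with $\cpr_x[X_\Delta\le y]$, so your extra step closes a small gap in the presentation rather than introducing a different argument.
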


\begin{proof} Consider the set $\Gamma$ of paths $\gamma$ of length $\Delta$ with the property: $\gamma_0=x$, $\gamma_i \notin \{0,n\}$ for all $i$ and $\gamma_\Delta \leq y$. Then, using Lemma~\ref{lemmaend}
\begin{align*}
\rpr^t_x[X_\Delta\leq y]&=\sum \limits_{\gamma \in \Gamma} \rpr^t_x[\gamma]\\
&=\sum \limits_{\gamma \in \Gamma} \frac{h_n(\gamma_\Delta,t-\Delta)}{2^\Delta h_n(x,t)}\\
&=(1+\ord(n^{-2}))\frac{\cos^{-\Delta}\Big(\frac{\pi }{n}\Big)}{\sin\Big(\frac{\pi x}{n}\Big)}\sum \limits_{\gamma \in \Gamma} \frac{\sin\Big(\frac{\pi \gamma_\Delta}{n}\Big)}{2^\Delta }\\
&=(1+\ord(y^2 n^{-2})){\cos^{-\Delta}\Big(\frac{\pi }{n}\Big)}\sum \limits_{\gamma \in \Gamma} \frac{\gamma_\Delta}{x2^\Delta }\\
&=(1+\ord(\Delta n^{-2})+\ord(y^2 n^{-2}))\cpr_x[X_\Delta \leq y]\\
&=(1+\ord(\Delta n^{-2})+\ord(y^2\Delta^{-1}))\sqrt{\frac{2}{\pi}}\frac{y^{3}}{3\Delta^{{3}/{2}}}.
\end{align*}
This concludes the proof of Corollary~\ref{coroend}.
\end{proof}

 Now we prove the convergence of the local time for a fixed $x$. First we present a sketch of the proof:
\begin{itemize}
\item We divide our time interval o length $t^*$ into $m$ random intervals. This is done in such a way that these time intervals have roughly the same length.
\item For each of these intervals we have almost equal chances that the conditional random walk hits $x$. When the walk hits $x$ we start coupling an excursion with the conditional random walk on $\zp$ for a fixed time $T$. 
\end{itemize}

Here we are interested in coupling of the local times, so our coupling procedure will fail if any visit to~$x$ is not in the coupled excursions or if the coupling of the excursions fails.

Now in the proof of Theorem~\ref{teolocal} we will rigorously define the terms used in the sketch.
\begin{proof}[Proof of Theorem \ref{teolocal}]

We begin by considering the ring graph of size~$2n$ with the conditional random walk started at site~$n$. The vacant set convergence time is~$t^*=4\alpha n^3/\pi^2$ by Theorem~\ref{teolimit}. We will split this interval in  $m=\lfloor\ln n\rfloor$ random intervals. For this purpose let us define the sequence of points $A_j$ in the following way: Let $\eta=\left\lfloor t^*/\ln n\right\rfloor$ and with it define
\begin{align*}
A_0&=0;\\
A_{j+1}&=\inf \{t\geq A_j+\eta : X_t=n  \},\quad \mbox{ if } j<m;\\
A_{m+1}&=t^*.
\end{align*}
Then, the interval $I_j$ for $j\leq m$ is defined as
\begin{equation}
\label{interval} I_j:=[A_{j-1},A_j),
\end{equation}
and the remaining time interval~$R$ is defined as
\begin{equation}
\label{remaining}R:=[A_m, A_{m+1}].
\end{equation}
Now we  want to show that, almost surely, the lengths of all above intervals are asymptotic to $\eta$; by definition of $I_j$ we can say that its length can be represented as $I_j=\eta+T_j$, where $T_j$  satisfies
\begin{equation}
\label{lawT} \pr[T_j=a]=\ex_n\big[\rpr^{t^*-A_{j-1}-\eta}_{X_\eta}[\tau_n=a] \mid \tau_{\{0,2n\}}>t^*-A_{j-1}\big].
\end{equation}
This is because from the moment $A_{j-1}$ the process still have $t^*-A_{j-1}$ steps to run without hitting the origin. Also, we want the first time where $X_t=n$ after $\eta$, so the starting point is $X_\eta$ and from there are considering the hitting time of $n$, which justifies the expression for the probability inside the expectation.

Consider $j<m$, for any $\epsilon>0$ we have
\begin{align}
\nonumber\pr[|I_j|>(1+\epsilon)\eta]&=\pr_[T_j>\epsilon\eta]\\
\label{Tbound}&=\ex_n[\rpr^{t^*-A_{j-1}-\eta}_{X_\eta}[\tau_n>\epsilon\eta] \mid \tau_{\{0,2n\}}>t^*-A_{j-1}].
\end{align}
Observe also that $\epsilon\eta$, $t^*-A_{j-1}-\eta$, and the difference $t^*-A_{j-1}-(1+\epsilon)\eta$ all satisfy condition $\eqref{cond}$, so we use Lemma~\ref{lemmamid} and get
\begin{align*}
\pr[|I_j|>(1+\epsilon)\eta]&\leq (1+o(1))\frac{8}{\pi}\exp\big\{-\frac{3\pi^2\epsilon\eta}{8n^2}\Big\}\ex_n\Big[ \cos\Big(\frac{\pi X_\eta}{2n}\Big) \mid \tau_{\{0,2n\}}>t^*-A_{j-1}\Big]\\
&\leq (1+o(1))\frac{8}{\pi}\exp\big\{-\frac{3\pi^2\epsilon\eta}{8n^2}\Big\}\\
&=(1+o(1))\frac{8}{\pi}\exp\Big\{-\frac{3\alpha\epsilon n}{2}\Big\}.
\end{align*}
So, we have  an exponential bound for the tail probability which is summable, and therefore this shows that $I_j\sim \eta$ a.s..

Before constructing the coupling, let us discuss the probability of a successful coupling between trajectories of conditional random walks on the ring graph up to time $t$ and on $\zp$. Assume that $t$ is of order $n^3$ and let $x$ be a fixed value, we are interested in coupling the paths of both processes up to a time $T=n^\mu$ (where $\mu<1$), using the maximal coupling. Then the coupling event probability $\pr[C]$ can be estimated using the expressions for the laws~$\eqref{lawring}$ and $\eqref{lawcond}$. Let $\Gamma$ be the set of all paths started in $x$ and with length $T$. 
We have the following expression for the coupling event probability
\begin{align*}
\pr[C^\complement]&=\frac{1}{2} \sum \limits_{\gamma \in \Gamma} \big\vert \cpr_x[\gamma]-\rpr_x^t[\gamma]\big\vert.
\end{align*}
We use Theorem~\ref{asymp}, but we have the stronger condition that~$t$ is of order~$n^3$, so instead we use expression $\eqref{exact}$ inside the proof and for each term in the sum we have 
\begin{align*}
\big\vert \cpr_x[\gamma]-\rpr_x^t[\gamma]\big\vert&= \left\vert \frac{\gamma_T}{2^T x}-\frac{h_{2n}(\gamma_T,t-T)}{2^T h_{2n}(x,t)}\right\vert\\
&= \frac{1}{2^T}\left\vert \frac{\gamma_T}{x}-(1+\ord(n e^{-{3t\pi^2}/{(2n^2)}}))\cos^{-T}\Big(\frac{\pi}{2n}\Big)\frac{\sin\Big(\frac{\pi \gamma_T}{2n}\Big)}{\sin\Big(\frac{\pi x}{2n}\Big)}\right\vert.
\end{align*}
As $\cos^{-T}\Big(\frac{\pi}{2n}\Big)=1-\ord(n^{-2}T)$, the error term in the cosine asymptotic approximation dominates the error term in  expression $\eqref{exact}$ and then 
\[
(1+\ord(n e^{-{3t\pi^2}/{(2n^2)}}))\cos^{-T}\Big(\frac{\pi}{2n}\Big)=1-\ord(n^{-2}T).
\]
So, we have
\begin{align}
\nonumber \big\vert \cpr_x[\gamma]-\rpr_x^t[\gamma]\big\vert&=\frac{1}{2^T} \left\vert \frac{\gamma_T}{x}-(1-\ord(n^{-2}T))\frac{\sin\Big(\frac{\pi \gamma_T}{2n}\Big)}{\sin\Big(\frac{\pi x}{2n}\Big)}\right\vert\\
\label{2terms}&\leq\frac{1}{2^T} \left\vert \frac{\gamma_T}{x}-\frac{\sin\Big(\frac{\pi \gamma_T}{2n}\Big)}{\sin\Big(\frac{\pi x}{2n}\Big)}\right\vert+\ord(n^{-2}T)\frac{\sin\Big(\frac{\pi \gamma_T}{2n}\Big)}{2^T\sin\Big(\frac{\pi x}{2n}\Big)}.
\end{align}
Next,  we sum the second term of $\eqref{2terms}$ in $\Gamma$ to obtain
\begin{align*}
\sum \limits_{\gamma \in \Gamma}\frac{\sin\Big(\frac{\pi \gamma_T}{2n}\Big)}{2^T\sin\Big(\frac{\pi x}{2n}\Big)}&=\frac{\ex_x\Big[\sin\Big(\frac{\pi X_T}{2n}\Big)\ind[\tau_0>T]\Big]}{\sin\Big(\frac{\pi x}{2n}\Big)}\\
&\leq\frac{\ex_x\sin\Big(\frac{\pi X_T}{2n}\Big)}{\sin\Big(\frac{\pi x}{2n}\Big)}
\end{align*}
As  the sine is concave on the interval $[0,\pi]$ we can use Jensen's inequality and get 
\begin{align}
\nonumber \frac{\ex_x\sin\Big(\frac{\pi X_T}{2n}\Big)}{\sin\Big(\frac{\pi x}{2n}\Big)}&\leq  \frac{\ex_x\sin\Big(\frac{\pi X_T}{2n}\Big)}{\sin\Big(\frac{\pi x}{2n}\Big)}\\
\label{back1}&\leq \frac{\sin\Big(\frac{\pi \ex_xX_T}{2n}\Big)}{\sin\Big(\frac{\pi x}{2n}\Big)}=1.
\end{align}
Now we sum the first term of~$\eqref{2terms}$ in $\Gamma$ to get
\begin{align*}
\sum \limits_{\gamma \in \Gamma}\frac{1}{2^T} \left\vert \frac{\gamma_T}{x}-\frac{\sin\Big(\frac{\pi \gamma_T}{2n}\Big)}{\sin\Big(\frac{\pi x}{2n}\Big)}\right\vert&\leq\ex_x\left\vert \frac{X_T}{x}-\frac{\sin\Big(\frac{\pi X_T}{2n}\Big)}{\sin\Big(\frac{\pi x}{2n}\Big)}\right\vert.
\end{align*}
As the maximum value that $X_T$ can achieve starting at $x$ is $x+T$, we have that $X_T=o(n)$ and then we can use the asymptotic expression for the  sine and get
\begin{align}
\nonumber \ex_x\left\vert \frac{X_T}{x}-\frac{\sin\Big(\frac{\pi X_T}{2n}\Big)}{\sin\Big(\frac{\pi x}{2n}\Big)} \right \vert &=\ex_x\left\vert \frac{X_T}{x}-\frac{X_T-\ord(X_T^3 n^{-2} )}{x-\ord(n^{-2})} \right \vert\\
\nonumber &=\ex_x\left\vert \frac{X_T}{x}\ord(X_T^2 n^{-2} )\right \vert\\
\label{back2}&=\ord( n^{-2} )\ex_x\left\vert {X_T}\right \vert^3.
\end{align}
By the fact that $X_T$ is a simple random walk, we can represent the steps as a sequence of i.i.d. random variables $Y_i$, where $Y_i$ take values $+1$ and $-1$ with probability $1/2$, so $X_t=x+S_t$, where $S_t=Y_1+Y_2+\ldots+Y_t$. We will use this to bound $\ex_x\left\vert {X_T}\right \vert^3$.
\begin{align*}
\ex_x\left\vert {X_T}\right \vert^3&=\ex\left\vert {x+S_t}\right \vert^3\\
&\leq \ex\left\vert S_T\right \vert^3+3x\ex_x\left\vert S_T\right \vert^2+\ord(T^{1/2})\\
&=\ex\left\vert S_T\right \vert^3+\ord(T)
\end{align*}
Theorem 7.1.1 of~\cite{probM} shows that for the simple random 
walk~$S_t$ started at~$0$ and positive~$x$ it holds
\[
\pr[S_t\geq x]<\exp\Big\{-\frac{t^2}{2t}  \Big\}.
\]
For any constant $\beta>0$ we have
\begin{align*}
\pr[\left\vert S_T\right \vert^3\geq\beta T^2]&=2\pr[S_T\geq\beta^{\frac{1}{3}} T^{\frac{2}{3}}]\\
&\leq 2\exp\Big\{-\frac{\beta^{\frac{2}{3}}T^{\frac{1}{3}}}{2} \Big\},
\end{align*}
 a stretched exponential bound in $T$.

With this we can finally estimate $\ex_x\left\vert S_T\right \vert^3$. Let $D$ be the event $\{\left\vert S_T\right \vert^3\geq\beta T^2\}$, then, since $\left\vert S_T\right \vert\leq T$, we have
\begin{align*}
\ex\left\vert S_T\right \vert^3&=\pr[D]\ex[\left\vert S_T\right \vert^3\mid D]+\pr[D^C]\ex[\left\vert S_T\right \vert^3\mid D^C]\\
&\leq2\exp\Big\{-\frac{\beta^{{2}/{3}}T^{{1}/{3}}}{2}\Big\}{T^3}+\beta T^2=\ord(T^2).
\end{align*}
Finally,
\begin{align}
\nonumber \ex_x\left\vert {X_T}^3\right \vert&\leq \ex\left\vert S_T\right \vert^3+\ord(T)\\
\label{back3}&=\ord(T^2).
\end{align}
Then we can bound the coupling event probability. Using $\eqref{back1}$, $\eqref{back2}$ and $\eqref{back3}$ together we have
\begin{align}
\nonumber 2\pr[C^\complement]&=\sum \limits_{\gamma \in \Gamma} \big\vert \cpr_x[\gamma]-\rpr_x^t[\gamma]\big\vert\\
\label{f2}&=\ord(n^{-2})\ord(T^2)+\ord(n^{-2}T)=\ord(n^{-2}T^2).
\end{align}

We are still missing the probability that our procedure fails because the random walk has at least two excursions in at least one of intervals. 

Let us consider excursions of length $T$ starting at $x$. The initial times of the first and (possibly) second excursion starting in $x$ are denoted by
\begin{align*}
\tau^1_x(k)&=\inf \{t\in I_k: X_t=x\},\\
\tau^2_x(k)&=\inf \{t>\tau^1_x(k)+T: X_t=x\}.
\end{align*}
Observe that we do not necessarily have $\tau^2_x(k) \in I_k$, so the event that two or more excursions happen during $I_k$ is $\{\tau^2_x(k)\in I_k\}$. We want to calculate the probability of this event. The initial point of interval $I_k$ is $A_{k-1}$, so the process still have time $t^*-A_{k-1}$ to run. So, using the Markov property, we have
\begin{align}
\nonumber \lefteqn{\rpr_n^{t^*-A_{k-1}}[\tau^2_x(k)\in I_k]}\\
\label{terms}&=\rpr_n^{t^*-A_{k-1}}[\tau^1_x(k)\in I_k]\cdot\rpr_n^{t^*-A_{k-1}}[\tau^2_x(k)\in I_k\mid \tau^1_x(k)\in I_k].
\end{align}
Now let us work with each term of $\eqref{terms}$ separately. As both $|I_k|$ and $t^*-A_{k-1}-|I_k|$ satisfy almost surely condition $\eqref{cond}$, we can use Lemma~\ref{lemmapiece} and the fact that $|I_k|$ is of order $n^3(\ln n)^{-1}$, then we get
\begin{align*}
\rpr_n^{t^*-A_{k-1}}[\tau^1_x(k)\in I_k]&=\pr_n[\tau_x\leq |I_k| \mid\tau_{\{0,2n\}}>{t^*-A_{k-1}}]\\
&=\ex \Big[\ex\big[\pr_n[\tau_x\leq |I_k| \mid\tau_{\{0,2n\}}>{t^*-A_{k-1}}]\mid \sigma(I_k)\big]\Big]\\
&=1-\ex \big[\ex [(1+\ord(n^{-1})+\ord(  |I_k|n^{-4}))e^{-{|I_k|\pi^2 x}/{(8n^3)}}\mid \sigma(I_k)]\big]\\
&=1-(1+\ord(n^{-1}))\ex e^{-{|I_k|\pi^2 x}/{(8n^3)}}\\
&=1-(1+\ord(n^{-1}))e^{-{\eta\pi^2 x}/{(8n^3)}}\ex e^{-{T_k\pi^2 x}/{(8n^3)}}.
\end{align*}
Using Lemma~\ref{lemmamid} in the same way as we did in $\eqref{Tbound}$ we have:
\begin{align*}
\ex e^{-{T_k\pi^2 x}/{(8n^3)}}&=\pr[T_k>4 n^2 \ln n/\pi^2]\ex [e^{-{T_k\pi^2 x}/{(8n^3)}}\mid T_k> 4n^2 \ln n/\pi^2]\\
&\quad+\pr[T_k\leq 4n^2 \ln n/\pi^2]\ex [e^{-{T_k\pi^2 x}/{(8n^3)}}\mid T_k\leq 4n^2 \ln n/\pi^2]\\
&=\ord(n^{-\frac{3}{2}})+(1-\ord(n^{-\frac{3}{2}}))]\ex [e^{-{T_k\pi^2 x}/{(8n^3)}}\mid T_k\leq 4n^2 \ln n/\pi^2]\\
&=\ord(n^{-\frac{3}{2}})+(1-\ord(n^{-\frac{3}{2}}))(1-\ord(n^{-1}\ln n))\\
&=1-\ord(n^{-1}\ln n),
\end{align*}
and therefore
\begin{align*}
\rpr_n^{t^*-A_{k-1}}[\tau^1_x(k)\in I_k]&=1-(1+\ord(n^{-1}\ln n))e^{-{\eta \pi^2 x}/{(8n^3)}}\\
&=1-(1+\ord(n^{-1}\ln n))e^{-{\alpha x}/{(2\ln n)}}.
\end{align*}
As $e^{-{\alpha x}/{(2\ln n)}}=1-\frac{\alpha x}{2\ln n}+\ord((\ln n)^{-2})$, we obtain 

\begin{align}
\nonumber \rpr_n^{t^*-A_{k-1}}[\tau^1_x(k)\in I_k]&=1-(1+\ord(n^{-1}\ln n))\Big(1-\frac{\alpha x}{2\ln n}+\ord((\ln n)^{-2})\Big)\\
\label{prob} &=\frac{\alpha x}{2\ln n}+\ord(n^{-1}\ln n).
\end{align}
Now we work with the second probability of $\eqref{terms}$:
\begin{align*}
\rpr_n^{t^*-A_{k-1}}&[\tau^2_x(k)\in I_k\mid \tau^1_x(k)\in I_k]\\
&=\rex^{t^*-A_{k-1}}_x \pr_{X_T}[\tau_x\leq |I_k|-\tau_x^1(k)-T\mid\tau_{\{0,2n\}}> t^*-P_{k-1}-\tau^1_x(k)]
\end{align*}
Let us abbreviate $t^{**}={t^*-A_{k-1}}$. Again we use Corollary~\ref{coroend} and Lemma~\ref{lemmapiece}; then, as $T=n^\mu$ we get
\begin{align*}
\rex_x^{t^{**}} &\pr_{X_T}[\tau_x\leq |I_k|-\tau_x^1(k)-T\mid\tau_{\{0,2n\}}> t^*-A_{k-1}-\tau^1_x(k)]\\
&=\rpr_x[X_T\leq n^{\frac{\mu}{3}}]\rex_x^{t^{**}} \pr_{X_T}[\tau_x\leq |I_k|-\tau_x^1(k)-T\mid\tau_{\{0,2n\}}> t^*-P_{k-1}-\tau^1_x(k)]\mid X_T\leq n^{\frac{\mu}{3}}]\\
&\quad+\rpr_x[X_T> n^{\frac{\mu}{3}}]\rex_x^{t^{**}} \pr_{X_T}[\tau_x\leq |I_k|-\tau_x^1(k)-T\mid\tau_{\{0,2n\}}> t^*-P_{k-1}-\tau^1_x(k)]\mid X_T>n^{\frac{\mu}{3}}]\\
&=\ord(n^{-\frac{\mu}{2}})+(1-\ord(n^{-\frac{\mu}{2}}))\Big(1-  (1+\ord(n^{-1}))\rex_x^{t^{**}}e^{-{(|I_k|-\tau_x^1(k)-T)x\pi^2}/{(8n^3)}}\Big)\\
&\leq \ord(n^{-\frac{\mu}{2}})+(1-\ord(n^{-\frac{\mu}{2}}))\Big(1-  (1+\ord(n^{-1}\ln n))\rex_x^{t^{**}}e^{-{|I_k|x\pi^2}/{(8n^3)}}\Big)\\
&=\ord(n^{-\frac{\mu}{2}})+(1-\ord(n^{-\frac{\mu}{2}}))\Big(1-  (1+\ord(n^{-1}\ln n))e^{-{\alpha x}/{(2\ln n)}}\Big)\\
&=\ord(n^{-\frac{\mu}{2}})+(1-\ord(n^{-\frac{\mu}{2}}))\Big(1-  (1+\ord(n^{-1}\ln n))\Big({1-\frac{\alpha x}{2\ln n}}\Big)\Big)\\
&=\frac{\alpha x}{2\ln n}(1+\ord(n^{-\frac{\mu}{2}}\ln n)).
\end{align*}
So, we can bound the probability that a specific interval $I_j$ contains at least two excursions to $x$:
\begin{align*}
\rpr_n^{t^*-A_{k-1}}[\tau^2_x(k)\in I_k]&\leq\Big(\frac{\alpha x}{2\ln n}+\ord(n^{-1}\ln n)\Big)\Big(\frac{\alpha x}{2\ln n}+\ord(n^{-\frac{\mu}{2}})\Big)\\
&=\frac{\alpha^2 x^2}{4\ln^2 n}+\ord(n^{-\frac{\mu}{2}}(\ln n)^{-1}).
\end{align*}
Finally we bound the probability that at least one interval contain at least two excursions
\begin{align}
\nonumber\rpr^{t^*}_n\Big[\bigcup_{k=1}^m\{\tau^2_x(k)\in I_k\}\Big]&\leq \sum \limits_{k=1}^m\rpr_n^{t^*-A_{k-1}}[\tau^2_x(k)\in I_k]\\
\nonumber&\leq m\Big(\frac{\alpha^2 x^2}{4\ln^2 n}+\ord(n^{-\frac{\mu}{2}}(\ln n)^{-1})\Big)\\
\label{f1}&=\frac{\alpha^2 x^2}{4\ln n}+\ord((\ln n)^{-2})\to 0\quad\mbox{as }n\to\infty.
\end{align}

After this we only need to worry about the remaining time $R$. We surely have that it is smaller than any of the intervals, as for any $j\leq m$ we have $|I_j|\geq \eta$, but $(m+1)\eta>t^*$. Let us bound the probability that the process visits~$x$ in the remaining time. 

We want to estimate $\rpr^R_{n}[\tau_x>R]$; consider the function $f:\zp \rightarrow [0,1]$ defined by
\[
f(t)=\rpr^t_n[\tau_x>t]
\]
By definition we have
\begin{align*}
\rpr^t_n[\tau_x>t]&=\frac{\pr_n[\tau_{\{x,2n\}}>t]}{\pr_n[\tau_{\{0,2n\}}>t}\\
&=\frac{h_{2n-x}(n-x,t)}{h_{2n}(n,t)}.
\end{align*}
If $t$ satisfies condition~$\eqref{cond}$ we have an asymptotic expression for $f(t)$
\begin{align*}
f(t)&=(1+\ord(n^{-2})){\sin\Big(\frac{\pi (n-x)}{2n-x}\Big)}\frac{\cos^{t}\Big(\frac{\pi }{2n-x}\Big)}{\cos^{t}\Big(\frac{\pi }{2n}\Big)}\\
&=(1+\ord(tn^{-4})))\exp\Big\{-\frac{t\pi^2x}{8n^3}\Big\}.
\end{align*}
So, asymptotically the function is decreasing. We now show that $R$ satisfies condition $\eqref{cond}$. Fix a constant $\beta >4/\pi^2$, then write
\begin{align}
\nonumber\pr[R>\beta n^2 \ln n]&=\pr\Big[t^*-m\eta-\sum_{k=1}^m T_k>\beta n^2 \ln n\Big]\\
\nonumber &=\pr\Big[\sum_{k=1}^m T_k<t^*-m\eta-\beta n^2 \ln n\Big]\\
\label{bounding}&\geq 1-\frac{\sum_{k=1}^m \ex T_k}{t^*-m\beta\eta n^2 \ln n}.
\end{align}
To bound the expectations in $\eqref{bounding}$ we use $\eqref{lawT}$:
\begin{align*}
\pr\Big[T_k>\frac{16n^2\ln n}{\pi^2}\Big]&=\rex_n\Big[\rpr_{X_\eta}^{t^*-A_{k-1}-\eta}\Big[\tau_n>\frac{16n^2\ln n}{\pi^2}\Big]\mid \tau_{\{0,2n\}}>t^*-A_{k-1}\Big]\\
&= (1+\ord(n^{-2}\ln n))\frac{8}{\pi}n^{-6}.
\end{align*}
Also, as each $T_k$ is bounded by $t^*$, we have
\begin{align*}
\ex T_k&=\ex\Big[ T_k\mid T_k>\frac{16n^2\ln n}{\pi^2}\Big]\pr\Big[ T_k>\frac{16n^2\ln n}{\pi^2}\Big]\\
&\quad+\ex\Big[ T_k\mid T_k\leq\frac{16n^2\ln n}{\pi^2}\Big]\pr\Big[ T_k\leq\frac{16n^2\ln n}{\pi^2}\Big]\\
&\leq \ord(t^*n^{-6})+\frac{16n^2\ln n}{\pi^2}(1+\ord(n^{-6}))\\
&\leq \frac{16n^2\ln n}{\pi^2}+ \ord(n^{-3}).
\end{align*}
Now observe that
\begin{align*}
t^*-m\eta&=t^*-(\lfloor \ln n \rfloor)\left\lfloor \frac{t^*}{\ln n}\right\rfloor\\
&=\ord({t^*}({\ln n})^{-1}).
\end{align*}
Therefore
\begin{align*}
\pr[R>\beta n^2 \ln n]&\geq 1-\frac{\sum_{k=1}^m \ex T_k}{t^*-m\eta-\beta n^2 \ln n}\\
&\geq 1-\frac{m\Big({16n^2\ln n}/{\pi^2}+ \ord(n^{-3})\Big)}{\ord({t^*}({\ln n})^{-1})}\\
&= 1-\ord((\ln n)^3n^{-1}).
\end{align*}
Then with high probability $R$ satisfies condition $\eqref{cond}$, consequently
\begin{align}
\nonumber \rpr^R_{n}[\tau_x>R]&=\ex f(R)\\
\nonumber &\geq \ex [f(R)\mid R>\beta n^2 \ln n]\pr[R>\beta n^2 \ln n]\\
\nonumber &=\ex [(1+\ord(Rn^{-4})))e^{-{R\pi^2x}/{(8n^3)}}\mid R>\beta n^2 \ln n](1-\ord((\ln n)^3n^{-1}))\\
\nonumber &=e^{-{\beta \ln n\pi^2x}/{(8n^3)}}(1-\ord((\ln n)^3n^{-1}))\\
\label{remainingvisit}&=1-\ord((\ln n)^3n^{-1}).
\end{align}
So, we have that $\rpr^R_{n}[\tau_x\leq R]=\ord((\ln n)^3n^{-1})$, which is an upper bound for the probability of a visit in the remaining time.

Now we construct the coupling. The motivation behind the procedure is that all visits to~$x$ usually happen in ``batches'',  so when  a initial visit to $x$ happens, the walk visits $x$ again at some moments during a small time interval and then goes away again. When trying to couple the entire process we get a error of large order that turns the coupling almost impossible to happen, so, as we are only interested in the visits to $x$, we just need to couple then for these small time intervals of the excursions. 

As $x$ is fixed and we are working with asymptotic behaviors, these ``batches'' of visits are rare and, since we splitted our time in the intervals defined in~\eqref{interval} such that in each of them  probability of hitting $x$ is almost the same; this makes our calculations possible. So again let us recall our definitions  and the coupling procedure:
\begin{itemize}
\item Consider a conditional random walk on the graph of length $2n$ that will run for a time $t^*=4\alpha n^3/\pi^2$. We split the time into~$m$ intervals as defined in $\eqref{interval}$ and the remaining time.
\item In each interval we have a small chance of visiting~$x$. When the visit happens, from that moment on we couple the walk on the ring with   the conditional random walk on $\zp$ for a time $T=n^{\mu}$.
\end{itemize}
Our procedure can fail if and only if any of the following happens:
\begin{itemize}
\item At least one of the $m$ intervals has 2 or more excursions.
\item There is a visit to~$x$ in the remaining time.
\item The maximal coupling fails for at least one excursion.
\end{itemize}
Let us denote these three events by $F_1$, $F_2$  and $F_3$  respectively, and the event that the coupling fails by~$F$, so $F=F_1\cup F_2\cup F_3$. Also, denote the local time of this walk in site $x$ by $L_n(x)$.

The probability of $F_1$ was already bounded in $\eqref{f1}$ and we bounded the probability of $F_3$ in $\eqref{remainingvisit}$. As for $F_2$, the probability that the coupling of one excursion fails was dealt with in $\eqref{f2}$; since as we have at most $m$ excursions, then
\begin{align*}
\pr[F_2]\leq m\ord(n^{-2}T^2)=\ord(n^{-2(1-\mu)}\ln n ).
\end{align*}
Consequently,
\begin{align*}
\pr[F]&\leq \pr[F_1]+\pr[ F_2]+\pr[ F_3]\\
&\leq\frac{\alpha^2 x^2}{4\ln n}+\ord((\ln n)^{-2})+\ord(n^{-2(1-\mu)}\ln n )+\ord((\ln n)^3n^{-1})\\
&=\frac{\alpha^2 x^2}{4\ln n}+\ord((\ln n)^{-2}).
\end{align*}

Now we have an estimate on the probability that the procedure fails; we need then to see which is the distribution of the number of visits to $x$ of an excursion. We have a conditional random walk on $\zp$ running up to time $n^\mu$. Let us consider each excursion as a part of a random walk in $\zp$ started in $x$ and denote by $V_i$ the number of visits of  $i$th random walk to $x$ if we let it run indefinitely; also denote by $T_{i,k}$ the time between the $k$th and $(k+1)$th visits. With this the number of visits of $i$th excursion $W_i$ can be defined as the number of visits of the walk up to time $n^\mu$ and be represented by
\[
W_i=\sum \limits_{j=1}^{V_i}\ind\left[\sum \limits_{k=1}^{j-1} T_{i,k}\leq n^\mu\right].
\]
Since $\sum\limits_{k=1}^{V_i-1} T_{i,k}$ is the time of the last visit; by Lemma~\ref{lemmacrw} it is finite as the number of visits is finite. This means that $W_i \rightarrow V_i$ almost surely, and since $|e^{itW_i}|=1$,  by the dominated convergence theorem
\begin{align*}
\phi_{W_i}(t)&=\ex e^{itW_i}\rightarrow \phi_{V_i}(t).
\end{align*}

Each coupled excursion is independent, so if we denote by $B_i$ the Bernoulli random variable indicating if the visit in $x$ at interval $I_i$ happened. Then $L_n(x)$ can be written as 
\[
L_n(x)=\sum \limits_{i=1}^m B_iW_i,
\]
where the variables $W_i$ and $B_i$ are independent. Therefore
\begin{align*}
\phi_{L_n(x)}(t)&=\ex e^{itL_n(x)}\\
&=\prod \limits_{i=1}^m [\pr[B_i=0]+\pr[B_i=1]\phi_{W}(t)].
\end{align*}
The probability that the interval $I_i$ has a visit (and then an excursion) was calculated in $\eqref{prob}$, so we write
\begin{align*}
\phi_{L_n(x)}(t)&=\prod \limits_{i=1}^m \left(1-\frac{\alpha x}{2\ln n}+\ord(n^{-1})+\Big(\frac{\alpha x}{2\ln n}+\ord(n^{-1})\Big)\phi_{W}(t)\right)\\
&=\left(1+\frac{\alpha x}{2\ln n}(\phi_{W}(t)-1)+\ord(n^{-1})\right)^{m}\\
&=\exp\Big\{m\ln\left(1+\frac{\alpha x}{2\ln n}(\phi_{W}(t)-1)+\ord(n^{-1})\right)\Big\}.
\end{align*}
As $|\phi_{W}(t)-1|\leq 2$, we can use the asymptotic expansion $\ln(1+x)=x+\ord(x^2)$ and
\begin{align*}
\phi_{L_n(x)}(t)&=\exp\Big\{\frac{mx\alpha }{2\ln n}(\phi_{W}(t)-1)+\ord(m(\ln n)^{-2})\Big\}
\end{align*}
and, since $m=\lfloor\ln n\rfloor$, this becomes 
\begin{align*}
\phi_{L_n(x)}(t)&=\exp\Big\{\frac{\alpha x}{2}(\phi_{W}(t)-1)+\ord((\ln n)^{-1})\Big\}\\
&\rightarrow \exp\Big\{\frac{\alpha x}{2}(\phi_{V}(t)-1)\Big\}.
\end{align*}

Now, using Lemma~\ref{lemmacrw}, we have that $V\sim {\rm Geometric}({(2x)^{-1}})$ with $\pr[V=k]=\frac{1}{2x}(1-\frac{1}{2x})^{k-1}$ and, finally,
\begin{align*}
\phi_{L_n(x)}(t)&\rightarrow \exp\Big\{{\alpha x^2}\frac{(e^{it}-1)}{2x-(2x-1)e^{it}}\Big\}\\
&=\phi_{\ell(x)}(t).
\end{align*}
Then, since $\phi_{\ell(x)}(t)$ is continuous at~$0$, one can use the continuity theorem (cf.\ e.g.\ Theorem~9.5.2 of~\cite{ppath}) and conclude that
\[
L_n(x)\lawlimit \ell(x),
\]
as desired.
\end{proof}

\bibliography{refe}
\end{document}